\documentclass[reqno,11pt]{amsart} 
    \usepackage{amsmath,amscd,amsfonts,amssymb}
    \usepackage{mathrsfs,dsfont}
    \usepackage{color}
    \usepackage{mathtools}
    \usepackage{hyperref}
    \usepackage{tikz-cd}
    \usepackage[normalem]{ulem}
    \usepackage{xcolor}
    \usepackage{enumitem}

    \usepackage{tikz}
    \usetikzlibrary{decorations.pathreplacing}
    
    \numberwithin{equation}{section}
    \numberwithin{figure}{section}
    
    \def\R{\mathbb{R}}

    \def\eps{\varepsilon}

    \renewcommand\leq{\leqslant}
    \renewcommand\geq{\geqslant}

    \theoremstyle{plain}
    \newtheorem{thm}{Theorem}[section]
    \newtheorem{theorem}[thm]{Theorem}
    
    \newtheorem{lemma}[thm]{Lemma}

    \newtheorem{proposition}[thm]{Proposition}

    \newtheorem*{claim*}{Claim}

    \theoremstyle{definition}
    \newtheorem{definition}[thm]{Definition}
    \newtheorem*{definition*}{Definition}
    \newtheorem*{remarks*}{Remarks}
    \newtheorem*{remark*}{Remark}
    \newtheorem{remark}[thm]{Remark}

    \newcommand{\rank}{\mathrm{rank}}

    \newcommand{\Kak}{\mathrm{Kak}}
    \newcommand{\lin}{\mathrm{lin}}
    
    \renewcommand{\H}{\mathrm{H}}

\begin{document}

	\title{Lifting curved Kakeya sets to linear Kakeya sets}
 
	\author{Arian Nadjimzadah}
	\address{UCLA Department of Mathematics, Los Angeles, CA 90095.}
	\email{anad@math.ucla.edu}

	\date{}
	
	\keywords{}

    \begin{abstract}
        We prove that many curved Kakeya problems, originally motivated by H\"ormander's oscillatory integral problem, lift to the classical Kakeya problem in higher dimensions.

        As a consequence, if the classical Kakeya set conjecture were true in all dimensions, 
        the families of curves whose curved Kakeya sets have full dimension are dense among H\"ormander-type families. This is in contrast to the nowhere dense Bourgain's condition, which is a necessary condition for best-case curved Kakeya maximal function estimates. 
        Furthermore the linear Kakeya set conjecture would imply a fairly complete understanding of Kakeya sets of quadratic curves, as first studied systematically by Wisewell. These results give a better understanding of a question posed by Guo--Guth--Nadjimzadah--Shen--Zhang, and they more generally show that the classical Kakeya conjecture in high dimensions rests on a broad range of curved Kakeya problems in lower dimensions. 
    \end{abstract}
	
	\maketitle

    \section{Introduction}

    The Kakeya set conjecture in $\R^n$ asserts that a compact set with a line segment in each direction has Hausdorff dimension $n$. 
    Wang and Zahl recently proved this conjecture in $\R^3$ \cite{wang2025volumeestimatesunionsconvex}. The geometry of lines in $\R^3$ is just special enough that their proof did not need to grapple with any curved objects. 
    However already in $\R^4$, there are certain quadric surfaces that are ``near misses'' to the Kakeya conjecture \cite{guth2025introductionproofkakeyaconjecture}, so one is essentially forced to establish Kakeya-type estimates for curved objects. 
    
    The purpose of this paper is to make this intuition concrete,
    and show that strong results for Kakeya sets of curves would follow from the resolution of the classical \emph{linear} Kakeya set conjecture in higher dimensions. These Kakeya sets of curves are motivated by H\"ormander's oscillatory integral problem, and they may have exotic behavior as was first discovered by Bourgain. We will see that under the assumption of the linear Kakeya set conjecture in all dimensions, the families of curves whose curved Kakeya sets have full dimension are dense in the H\"ormander-type families. 
    At the same time Bourgain's condition, which is closed nowhere dense, is necessary for optimal curved Kakeya maximal function estimates. Conditional on linear Kakeya, we also fully classify the minimal dimensions of Kakeya sets of quadratic curves in $\R^n$, as first studied in depth by Wisewell \cite{wisewell}, in particular showing that they are all integers. 
    Both these results support a question posed in \cite{guo2026curvedkakeyaproblemsprojective}.

    Our strategy is to lift a Kakeya set of curves to a Kakeya set of lines by lifting each of its curves to a ruled surface. Roughly speaking, the more ``complicated'' a family of curves is, the more dimensions one requires to lift its Kakeya sets to linear Kakeya sets. 
    Before giving the formal definitions and statements of results, we present a simple example that illustrates the mechanism.

    \subsection{A basic example of the lifting mechanism}\label{subsec: worked out example}

    Consider the $4$-parameter family of curves
    \begin{align}
        \gamma_{y,w} = \{(w_1 + t y_1+ t^2y_1, w_2 + t y_2 ,t) : |t| \leq 1/10\} \subset \R^3,
    \end{align}
    where the ``direction'' is $y$, and $|y| \leq 1/10, |w| \leq 10$. Without the perturbation ``$t^2y_2$'', this would be the standard family of lines in the linear Kakeya problem. 
    A corresponding curved Kakeya set $K$ contains a curve $\ell_{y,w(y)}$ for each $y$. 
    Let us now see how to lift the curved Kakeya set $K \subset \R^3$ to a linear Kakeya set $\widetilde K \subset \R^4$. 

    Define the submersion $\pi : \R^4 \to \R^3$ by $\pi(x_1,x_2,s,t)=(x_1+ts,x_2,t)$ and the 5-parameter family of line segments
    \begin{align}
        L_{y,w,r} = 
        \{(w_1,w_2,r,0) +t(y_1-r,y_2,y_1,1) : |t| \leq 1/10\}.
    \end{align}
    The \emph{ruling} parameter is $r \in [-1/10,1/10]$.
    The sets $\pi^{-1}(\gamma_{y,w})$ are 2-dimensional quadric surfaces ruled by
    $\bigcup_{r} L_{y,w,r}$. For a compact set $\Omega_0$ large enough to contain the family of line segments, define the lifted set
    \begin{align}
        \widetilde K = \pi^{-1}(K) \cap \Omega_0.
    \end{align}
    Since $K$ is a Kakeya set of the curves $\gamma_{y,w}$,
    \begin{align}
        \widetilde K \supset \bigcup_y \pi^{-1} (\gamma_{y,w(y)}) 
        \supset \bigcup_{y,r} L_{y,w(y),r}.
    \end{align}
    The slope map $(y_1,y_2,r) \mapsto (y_1-r,y_2,y_1)$ has full rank. Hence the lifted lines have an open set of directions in $\R^4$, and finitely many rotated copies of $\widetilde K$ form a linear Kakeya set. 
    Since $\pi$ is a submersion with $1$-dimensional fibers,  
    \begin{align}
        \dim_H \widetilde K \leq \dim_H K + 1.
    \end{align}
    Thus if the linear Kakeya set conjecture held in $\R^4$, these curved Kakeya sets would have full Hausdorff dimension $3$. 
    
    Figure \ref{fig: lift cartoon} is a cartoon of this mechanism, drawn in one lower dimension.
    \begin{figure}
        \centering
        \includegraphics[width=0.5\linewidth]{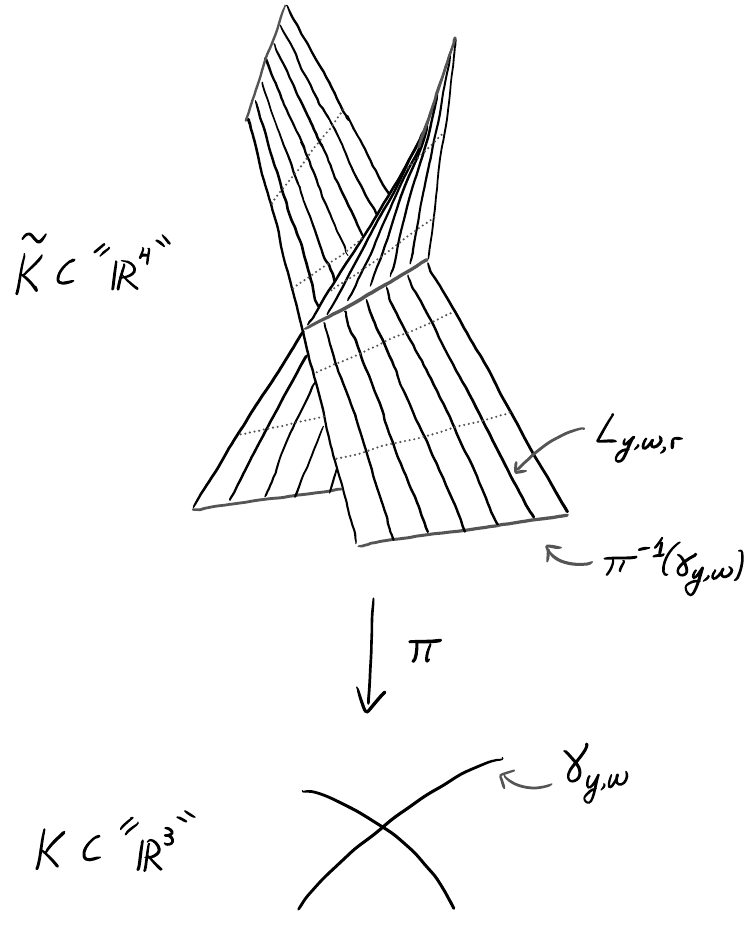}
        \caption{Cartoon of the lifting mechanism.}
        \label{fig: lift cartoon}
    \end{figure}

    \subsection{Main results}

    We use the definition of H\"ormander-type families of curves from \cite{guo2026curvedkakeyaproblemsprojective}, which is a natural generalization of the families of characteristic curves that arise in H\"ormander's oscillatory integral problem. Let $n \geq 2$ and fix compact balls $W,Y \subset \R^{n-1}$ and a compact interval $I \subset \R$. Set $Q = W \times I \times Y$ and consider the space $C^\infty(Q ; \R^{n-1})$ equipped with the Fr\'echet topology induced by the norms
    \begin{align}
        \|X \|_{C^r(Q)} := \max_{|\alpha| \leq r} \| \partial^\alpha  X\|_{L^\infty(Q)}.
    \end{align}
    Let $\mathbf H \subset C^\infty(Q; \R^{n-1})$ be the set of maps satisfying \emph{H\"ormander's conditions}\footnote{This is a slight reformulation of \cite[Definition 1.14]{guo2026curvedkakeyaproblemsprojective}, and one can see that they are equivalent by using the Schur complement.}:
    \begin{align}
        \det\nabla_w X(w,t;y) \neq 0, \quad \det \partial_t [(\nabla_w X)^{-1} \partial_y X](w,t;y) \neq 0
    \end{align}
    on $Q$.
    The set $\mathbf H$ is open in $C^\infty(Q; \R^{n-1})$. 
    A map $X \in \mathbf H$ defines a $2(n-1)$-parameter family of curves $\mathcal C(X)$ consisting of 
    \begin{align}
        \gamma_{y,w} = \{(X(w,t;y),t) : t \in I\} \subset \R^{n}
    \end{align}
    for $(y,w) \in Y \times W$. We write $\gamma_{y,w}(t):=(X(w,t;y),t)$ for their parameterizations. 
    We call $y$ the direction of the curve. We will often add the superscripts $(n)$ to $\mathbf H$, $X$, and related objects, to remove any ambiguity as to the dimension $n$. 
    
    We say that a compact set $K \subset \R^n$ is an $X$-\emph{Kakeya set} if for each $y \in Y$, there is $w(y) \in W$ such that $\gamma_{y,w(y)} \subset K$.
    Define the minimal Hausdorff dimension 
    \begin{align}
        d_{\Kak}(X) = \inf\{\dim_\H K : \text{$K$ is an $X$-Kakeya set}\}.
    \end{align}
    By the curved version of Bourgain's bush argument in \cite{wisewell} and the even-dimensional curved Kakeya result in \cite{bourgainGuth}, 
    \begin{align}\label{eq: universal bound}
        d_{\Kak}(X) \geq 
        \begin{cases}
            (n+1)/2 & n \text{ odd,}\\
            (n+2)/2 & n \text{ even}.
        \end{cases}
    \end{align}
    Equality is attained by explicit quadratic families; see the remark following Theorem \ref{thm: quadratic} (see also \cite{guthOscillatory} for a construction with cubic curves).
    There is a related Kakeya maximal function,
    \begin{align}
        \mathcal K_{\delta}^{X} f(y) = \sup_{w \in W}\frac{1}{|N_\delta(\gamma_{y,w})|}\int_{N_\delta(\gamma_{y,w})} |f|,  \quad y \in Y,
    \end{align}
    where $N_\delta(\gamma_{y,w})$ denotes the $\delta$-neighborhood of $\gamma_{y,w}$.
    By understanding the $L^p$ bounds of these operators, one can make progress on oscillatory integral problems, and the framework includes several further maximal functions of interest in their own right (see \cite{guo2026curvedkakeyaproblemsprojective} for further details).

    The linear Kakeya problem arises from the choice $X_{\lin}(w,t;y) = w + ty$, and the linear Kakeya set conjecture in $\R^n$ asserts that 
    $d_\Kak(X_{\lin}^{(n)}) = n$. The stronger Kakeya maximal function conjecture asserts 
    \begin{align}\label{eq: lin Kak max est}
        \|\mathcal K_\delta^{X_{\lin}} f\|_{L^n(Y)} \lesssim_\eps \delta^{-\eps} \| f\|_{L^n(\R^n)}
    \end{align}
    Here and below, all parameter domains are fixed compact sets, and the implicit constants may depend on them. We write $A \lesssim_\eps B$ when $A \leq C(\eps) B$. 
    Since linear Kakeya sets play a distinguished role in this paper, we define 
    \begin{align}
        d_{\lin}(n) := d_{\Kak}(X_{\lin}^{(n)}), \qquad \mathcal K^{\lin, n}_\delta := \mathcal K_{\delta}^{X_{\lin}^{(n)}}.
    \end{align}
    For most $X$, the critical $L^n \to L^n$ maximal function estimate of the form \eqref{eq: lin Kak max est} fails, as Bourgain first showed and as later work generalized \cite{BourgainSeveralVariables, hormanderDichotomy, guo2026curvedkakeyaproblemsprojective}.
    In fact, \emph{Bourgain's condition} is necessary for this critical estimate. 

    \begin{definition}[Bourgain's condition \protect{\cite{guo2026curvedkakeyaproblemsprojective}}]
        We say that $X \in \mathbf H$ satisfies Bourgain's condition if there exists a scalar function $\lambda(w,t,\xi)$ such that 
        \begin{align}
            \partial_t^2((\nabla_w X)^{-1} \nabla_y X) = \lambda \partial_t ((\nabla_w X)^{-1} \nabla_y X) 
        \end{align}
        on $Q$.
        Let $\mathbf B \subset \mathbf H$ be the set of maps satisfying Bourgain's condition. 
    \end{definition}
    It is conjectured that Bourgain's condition is also sufficient for optimal maximal function estimates \cite{Nadjimzadah2026Bourgain, hormanderDichotomy}, and in particular for $d_{\Kak}(X^{(n)})=n$.

    Our first result says that, conditional on the linear Kakeya set conjecture in all dimensions, Bourgain's condition is far from necessary for the full-dimensional curved Kakeya set conclusion.

    \begin{theorem}\label{thm: density}
            Suppose that the linear Kakeya set conjecture holds in all dimensions. Fix $n \geq 3$. Then there is a dense set $\mathbf P^{(n)} \subset \mathbf H^{(n)}$ such that, for each $X \in \mathbf P$, every $X$-Kakeya set has Hausdorff dimension $n$. 

            Moreover, $\mathbf B \subset 
            \mathbf H$ is closed nowhere dense. Consequently $\mathbf P \setminus \mathbf B$ is dense in $\mathbf H$, and for each $X \in \mathbf P \setminus \mathbf B$, every $X$-Kakeya set has Hausdorff dimension $n$, whereas the critical estimate 
            \begin{align}
                \|\mathcal K_\delta^{X} f\|_{L^n(Y)} \lesssim_\eps \delta^{-\eps} \| f\|_{L^n(\R^n)}
            \end{align}
            fails.
    \end{theorem}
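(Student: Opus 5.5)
The plan is to take $\mathbf P^{(n)}$ to be the set of maps $X \in \mathbf H^{(n)}$ that are, after a smooth change of variables in the parameters and in space, \emph{polynomial in $t$}. More precisely, $\mathbf P$ consists of those $X$ for which $(\nabla_w X)^{-1}\partial_y X$, or a suitable normal form of $X$, is a polynomial in $t$ of some finite degree $D$ with coefficients smooth in $(w,y)$. For such $X$ I will generalize the lifting mechanism of Subsection \ref{subsec: worked out example}. Write the curve as
\[
\gamma_{y,w}(t) = \Big(w + \sum_{j=1}^{D} t^j a_j(w,y),\, t\Big).
\]
Choose a submersion $\pi:\R^{N}\to\R^n$ of the form
\[
\pi(x,s,t) = \Big(x + \sum_j t^j B_j s,\, t\Big),
\]
with auxiliary variables $s \in \R^{N-n}$ and constant matrices $B_j$. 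The aim is that each $\pi^{-1}(\gamma_{y,w})$ is ruled by line segments $L_{y,w,r}$. Matching coefficients of $t^j$ for $j\ge 2$ shows that, for a segment $t\mapsto (x_0+tv, s_0+tu, t)$, the constraints reduce to linear equations in $(v,u)$ that are solvable once $N-n$ is large, with degrees of freedom left as the ruling parameter $r$.

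Next I verify that the slope map $(y,r)\mapsto$ direction of $L_{y,w(y),r}$ has full rank $N-1$. This uses the second H\"ormander condition $\det\partial_t[(\nabla_wX)^{-1}\partial_yX]\neq 0$, which makes the $y$-derivatives of the slope nondegenerate in the first $n-1$ coordinates, while $r$ fills the remaining coordinates, exactly as in the example. Then finitely many rotated copies of $\widetilde K = \pi^{-1}(K)\cap\Omega_0$ form a linear Kakeya set in $\R^N$. Hence $N \le \dim_H\widetilde K \le \dim_H K + (N-n)$, so $\dim_H K = n$ under the linear conjecture in $\R^N$. One subtlety is that $w(y)$ need not be measurable. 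This is harmless, since linear Kakeya sets only need one segment per direction and we can select, for each direction, any $(y,r)$ achieving it. Another subtlety is that the rank condition is needed only locally. I will cover $Y$ by small balls, lift on each ball, and take finitely many unions.

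Density: polynomials in $t$ are dense in $C^\infty(Q)$ in the Fr\'echet topology; for instance, take Taylor/Bernstein-type approximations in $t$ uniformly in derivatives, or the Stone--Weierstrass theorem with derivatives. Since $\mathbf H$ is open, approximants of any $X\in\mathbf H$ stay in $\mathbf H$. The main obstacle I expect is the lifting step itself for general polynomial $X$, where $a_j$ depends on $w$ nonlinearly. Linear dependence on $w$ is what makes $\pi^{-1}(\gamma)$ ruled. I would first try to reduce to a normal form $X(w,t;y)=w+\sum_j t^j a_j(y)$ via the change of variables $w\mapsto X(w,0;y)$. If this fails, I would redefine $\mathbf P$ as this normal-form class and prove density of the normal forms by approximating and straightening the $w$-dependence, e.g.\ by polynomial approximation in all variables plus adding auxiliary lift coordinates for monomials in $w$.

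Finally, for $\mathbf B$: closedness holds because the condition is equivalent to the vanishing of the $2\times2$ minors (wedge products) between $\partial_t^2 M$ and $\partial_t M$, where $M=(\nabla_wX)^{-1}\nabla_yX$. Here $\partial_tM$ is nonvanishing by the H\"ormander condition, and these are continuous in $C^3$. For nowhere density, I will perturb $X$ by $\epsilon\, t^3 E(y)$ for a fixed matrix-valued $E$ such that the wedge becomes nonzero at some point; for all but finitely many $\epsilon$ this works, since the wedge is polynomial in $\epsilon$. Thus $\mathbf P\setminus\mathbf B$ is dense, being a dense set minus a closed nowhere dense set in an open subset of a Baire space. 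The failure of the critical maximal estimate off $\mathbf B$ is the cited necessity of Bourgain's condition \cite{guo2026curvedkakeyaproblemsprojective}.
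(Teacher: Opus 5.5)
\textbf{Verdict.} There is a genuine gap: your class $\mathbf P$ is the wrong one, so the lifting step fails for some of its members.

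\textbf{The failing step.} You claim that H\"ormander's second condition makes the slope map of the lifted segments have full rank. This is false, and being polynomial in $t$ does not help, even in your normal form $w+\sum_j t^j a_j(y)$.

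Take the quadratic family $X_B(w,t;y)=w+(tI+t^2B)y$ with $B\neq 0=B^2$, for example $n=3$ and $B$ a $2\times 2$ nilpotent Jordan block. It lies in $\mathbf H$ and in your class. Yet it has an $X_B$-Kakeya set of dimension $n-\rank B<n$ \cite{wisewell}, and the construction in Section \ref{sec: thm: quadratic} is unconditional. So the conclusion of the theorem is false on your $\mathbf P$. Moreover, if linear Kakeya holds, no full-rank lift of such $X_B$ can exist.

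\textbf{What is actually needed.} The slope of $L_{y,w,\rho}$ must be independent of $w$ and have full rank in $(y,\rho)$. Your remark that the arbitrary choice $w(y)$ is harmless is valid only under this independence. With the submersion $\pi(z_0,\dots,z_{D-1},t)=(\sum_j t^jz_j,t)$ one has $\pi(a+t\Theta,t)=a_0+\sum_{j=1}^{D-1}t^j(a_j+\Theta_{j-1})+t^D\Theta_{D-1}$. Hence the top coefficient of the curve is itself a slope component. It must be a function $B(y)$ of $y$ alone with $\det DB\neq 0$. The lower coefficients are free.

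\textbf{The obstacle you anticipated is not one.} Nonlinear $w$-dependence of $A_0,\dots,A_{D-1}$ is absorbed into the base point $a=(A_0,A_1-\rho_1,\dots,A_{D-1}-\rho_{D-1})$, with slope $\Theta=(\rho_1,\dots,\rho_{D-1},B(y))$.

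\textbf{Your proposed fixes do not close the gap.} The reparametrization $w\mapsto X(w,0;y)$ reaches your normal form only when $\partial_t\nabla_w X\equiv 0$. That is a closed condition with empty interior (adding $\eps t w$ breaks it), so the class is not dense, and it still contains the $X_B$ above.

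The missing step for density is the following. Take a polynomial approximant $S=\sum_{i<D}t^iA_i(w,y)\in\mathbf H$ and add $\eta t^D y$ with $\eta$ small. This stays in $\mathbf H$ and creates a $w$-independent, nondegenerate top coefficient $\eta y$.

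\textbf{The $\mathbf B$ part.} Your treatment is essentially fine. Closedness via $\partial_t^2M\wedge\partial_tM=0$ works, and density of $\mathbf P\setminus\mathbf B$ needs no Baire argument. For nowhere density, however, you should actually exhibit the matrix $E$ rather than assert it exists. The paper's perturbation $\tfrac{\eps}{2}(t-t_0)^2A_0T_0M(y-y_0)$, with $M\neq 0$ trace-free, makes this immediate: it leaves $\partial_tS_X(p_0)$ unchanged and shifts $\partial_t^2S_X(p_0)$ by exactly $\eps T_0M$.
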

    For $X \in \mathbf H \setminus \mathbf B$, the critical Kakeya maximal estimate fails because there are $\delta$-discretized $X$-Kakeya sets with small portions that can compress near a surface. However one often has access to at least a weak form of the polynomial Wolff axioms that prevents global compression near a surface; see for example \cite{guo2026curvedkakeyaproblemsprojective, beyondUniversal}.
    Theorem \ref{thm: density} says that, if one believes linear Kakeya in higher dimensions, then at least in $\mathbf P \setminus \mathbf B$ there is no way to coordinate this \emph{local} compression
    in many places in some ``fractal way'' to obtain an $X$-Kakeya set of Hausdorff dimension $< n$. This supports the intuition that a counterexample to a curved Kakeya problem should arise only for ``algebraic'' reasons.

    Theorem \ref{thm: density} supports a question posed in \cite[Section 1.9]{guo2026curvedkakeyaproblemsprojective} as well. It seems plausible that $d_{\Kak}(X^{(n)}) = n$ is a \emph{generic} property (in the sense that is holds on a dense open set) conditional on the linear Kakeya set conjecture in all dimensions. 
    For odd $n$, Guo-Liu-Xi already gave an improvement over the universal bound \eqref{eq: universal bound} for generic families of curves coming from H\"ormander-type phase functions \cite{guo2025curvedkakeyasetsgeneric}.\footnote{
    It seems plausible that their argument can be adapted to the larger class $\mathbf H$.}

    When we restrict to quadratic families of curves, we can say much more about the corresponding Kakeya sets. Let $n \geq 3$ and $B$ an $(n-1) \times (n-1)$ matrix. Consider the defining function $X_B : W_B \times I_B \times [-1,1]^{n-1} \to \R^{n-1}$ given by 
    \begin{align}\label{eq: quadratic def}
        X_B(w,t;y) = (w + (tI + t^2 B)y).
    \end{align}
    Here $W_B$ is a compact ball chosen large enough to make the construction of the compression examples in Theorem \ref{thm: quadratic} clean, and $I_B = [-\tau_B, \tau_B]$ is chosen small enough so that $X_B \in \mathbf H$. 
    Quadratic curves are general enough to exhibit Bourgain's compression phenomenon, and the study of Kakeya sets of quadratic curves was initiated by Wisewell \cite{wisewell}.

    Our second result shows that, conditional on linear Kakeya in all dimensions, we can compute the minimal dimensions $d_{\Kak}(X_B^{(n)})$ exactly.

    \begin{theorem}\label{thm: quadratic}
        Let $B$ be a $(n-1) \times (n-1)$ matrix. If the linear Kakeya set conjecture holds in dimension $n + \rank B^2$, then 
        \begin{align}
            d_{\Kak}(X_B^{(n)}) = n-\rank B + \rank B^2.
        \end{align}
        The minimum is attained by an $X_B$-Kakeya set contained in a smooth variety of dimension $n-\rank B + \rank B^2$.
        In particular, conditional on linear Kakeya set conjecture in all dimensions, the numbers $d_{\Kak}(X^{(n)}_B)$ are integers. 
    \end{theorem}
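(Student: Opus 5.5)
The plan is to prove the two inequalities separately. Both become tractable once $B$ is put in a normal form. First I would record the symmetries of the problem. A linear change $x\mapsto Px$, $w\mapsto Pw$, $y\mapsto Py$ replaces $B$ by $P^{-1}BP$ and preserves Hausdorff dimension. So by Jordan form we may take $B=B_{\mathrm{inv}}\oplus N$, with $B_{\mathrm{inv}}$ invertible and $N$ nilpotent. Rescaling $t$ and shrinking $I_B$ is also harmless. The quantity $\rank B-\rank B^2$ counts the nilpotent Jordan blocks of size $\geq 2$, i.e.\ the directions that $B$ "uses up". Heuristically, each such block contributes one unit of compression, exactly as in the example below. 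The main tool is the family of $t$-dependent shears $\Phi_t(x)=x-tBx$, and more generally $x\mapsto (I-tB)^{-1}$-type corrections. These convert part of the quadratic term $t^2By$ into linear motion.

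\textbf{Upper bound (construction).} First consider $B^2=0$ with $\rank B=r$. The shear gives
\[
x-tBx=(I-tB)w+ty+t^2By-t^2By=w+t(y-Bw).
\]
In the coordinates $(x-tBx,t)$, which form a diffeomorphism of $\R^n$, the curves become lines $w+tv$ with $v=y-Bw$. Choose a complement $V$ of $\operatorname{ran}B$. Since $\operatorname{ran}B\subset\ker B$ we have $\dim V\geq r$, so we can select $w(y)\in V$ with $Bw(y)$ equal to the $\operatorname{ran}B$-component of $y$; then $v\in V$. All chosen lines then lie in $V\times I$, which has dimension $n-r$, matching $n-\rank B+\rank B^2$. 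For general $B$, I would treat the invertible part and each nilpotent block separately. The invertible part, and the part of each block beyond its second step, contribute the uncompressed $\rank B^2$ dimensions. In each nilpotent block one applies the same shear on the top two coordinates to lose one dimension. The resulting Kakeya set is a product-type union and lies in a smooth variety of the stated dimension; this gives both the upper bound and the "attained in a smooth variety" clause. The one thing to check here is that $w(y)$ stays in $W_B$, which is why $W_B$ was taken large.

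\textbf{Lower bound (lifting).} I would imitate Subsection \ref{subsec: worked out example}. Set $m=\rank B^2$ and introduce ruling parameters $s\in\R^m$ parametrizing $\operatorname{ran}B^2$. Define a submersion $\pi:\R^{n+m}\to\R^n$ of the form $\pi(x,s,t)=(x+tB s',t)$, where $s'$ is a linear lift of $s$. Under $\pi$, the preimage of each $\gamma_{y,w}$ is ruled by lines $L_{y,w,s}$ whose slopes are affine in $(y,s)$. However, the slope map $(y,s)\mapsto$ slope has rank only $n-1+m-(\rank B-m)$, not $n-1+m$. To handle this, I would first apply the shear normalization from the upper-bound step. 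Let $K$ be an $X_B$-Kakeya set. For each $y$, the sheared curve lies in the lift, and the degenerate $\rank B-m$ directions appear as free translation parameters. Collecting these over the fibre, the lifted set $\widetilde K=\pi^{-1}(K)\cap\Omega_0$, thickened over the $\rank B-m$ degenerate directions, contains lines in an open set of directions in $\R^{n+m}$. So finitely many rotations of it form a linear Kakeya set. The submersion estimate then gives
\[
n+m\leq\dim_H\widetilde K\leq\dim_H K+m+(\rank B-m),
\]
which is $\dim_H K\geq n-\rank B+\rank B^2$.

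The main obstacle is making this last step rigorous: the thickening by the degenerate directions must cost exactly $\rank B-m$ dimensions and no more. The natural way to arrange this is to build the ruling so that those directions are genuinely fibre directions of a second submersion, rather than a product thickening (which Hausdorff dimension does not respect). I would therefore first work out the single-block case $N=J_k$ explicitly to find the correct $\pi$, and then take direct sums.
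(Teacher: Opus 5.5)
Your upper bound is correct and is essentially the paper's construction: with the natural complement, your blockwise choice of $w(y)$ is the paper's $w=Cy$, and the sheared condition cuts out the variety $\{Q_j=0\}$. (In the $B^2=0$ step, $w(y)\in V$ can be chosen because $B(V)=B(V+\operatorname{ran}B)=\operatorname{ran}B$, using $B(\operatorname{ran}B)=0$. The count $\dim V\ge r$ alone does not give this.)

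The gap is in the lower bound, which is the only place linear Kakeya enters. With only $m=\rank B^2$ fibre directions, $\pi(x,s,t)=(x+tBs',t)$ does not rule $\pi^{-1}(\gamma_{y,w})$ by lines. Comparing $t^2$-coefficients of $\pi(a+t\theta,b+t\sigma,t)$ and $\gamma_{y,w}(t)$ forces $B\sigma'=By$. So $By$ would have to lie in a subspace of dimension at most $m$, while $By$ ranges over $\operatorname{ran}B$, which has dimension $\rank B$. For example, if $B^2=0$, your $\pi$ is the identity and the curves are genuine parabolas. Hence the claimed ruling and the rank count $n-1+m-(\rank B-m)$ do not describe an actual family of segments. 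The step that should produce a linear Kakeya set in $\R^{n+m}$, the thickening over the ``degenerate directions,'' is asserted but never specified, and you leave it as the open obstacle. Your reason for distrusting a product thickening is also mistaken. Since $\dim_H(A\times[0,1]^k)=\dim_H A+k$ and Lipschitz maps do not raise Hausdorff dimension, a Minkowski thickening by a $k$-dimensional disc costs at most $k$. No second submersion is needed.

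The paper closes the argument by lifting with codimension $\rank B$ instead. Factor $B=UV$ with $U:\R^{\rank B}\to\R^{n-1}$ injective and $V$ surjective, and set $\pi(x,z,t)=(x+tUz,t)$. The segments $((w,\rho)+t(y-U\rho,Vy),t)$ project to $\gamma_{y,w}$, because $tU(\rho+tVy)=tU\rho+t^2By$. The slope map $(y,\rho)\mapsto(y-U\rho,Vy)$ has kernel isomorphic to $\ker(VU)$, and $\rank(VU)=\rank(UVUV)=\rank B^2$, so its rank is $n-1+\rank B^2$. A coordinate projection onto $\R^{n+\rank B^2}$, which is Lipschitz, then gives segments in an open set of directions, and hence $d_{\lin}(n+\rank B^2)\le\dim_H K+\rank B$.

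Your thickening idea can also be made rigorous with the right choices. Let $F=\operatorname{ran}B^2$, let $E$ be a complement of $F$ in $\operatorname{ran}B$, and split $By=f(y)+e(y)$ accordingly. Thicken each $t$-slice of $K$ by a large disc in $E$, costing $\rank B-\rank B^2$. Then lift by $\pi(x,s,t)=(x+ts,t)$ with $s\in F$, costing $\rank B^2$. For $\rho\in F$ and $e\in E$ small, the segments $(w(y)+t(y-\rho+e),\,\rho+tf(y),\,t)$ lie in the lifted set, since their image under $\pi$ differs from $\gamma_{y,w(y)}(t)$ by $te-t^2e(y)\in E$. The slope map $(y,\rho,e)\mapsto(y-\rho+e,f(y))$ has kernel isomorphic to $\operatorname{ran}B\cap\ker B$, of dimension $\rank B-\rank B^2$. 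It therefore has full rank $n-1+\rank B^2$. The tilt $e$ supplied by the thickening is what makes the directions open, and this rank verification is what your proposal is missing.
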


    \begin{remark}
        In the special case $B^2 = 0$, Wisewell already found $X_B$-Kakeya sets that show $d_{\Kak}(X_B^{(n)}) \leq n - \rank B$ \cite[Proposition 29]{wisewell}.
    \end{remark}
    
    \begin{remark}
        Sometimes one can use a lower-dimensional linear Kakeya input than in Theorem \ref{thm: quadratic}. For example if $B$ is a multiple of $I$, one can check that $\mathcal C(X_B)$ is equivalent to the family of lines $\mathcal C(X_{\lin})$ after a diffeomorphism in $t$, so one can apply linear Kakeya in dimension $n$ instead of in dimension $n + \rank B^2 = 2n$. 
    \end{remark}

    \begin{remark}
        Theorem \ref{thm: quadratic} recovers the universal lower bounds in \ref{eq: universal bound}. If $n = 2k + 1$, take $B$ to be the direct sum of $k$ copies of the $2\times 2$ nilpotent Jordan block $J_2(0)$. Then $\rank B = k$ and $B^2 = 0$, so $d_{\Kak}(X_B) = k + 1 = (n+1)/2$. If $n = 2k$, take $B$ to be the direct sum of $k-1$ copies of $J_2(0)$ and one $1 \times 1$ zero block; then $d_\Kak(X_B) = k + 1 = (n+2)/2$.
    \end{remark}

    Our final result concerns the $\tan$-example from \cite{Nadjimzadah2026Bourgain}. Define the map 
    \begin{align}
        &X^{(n)}_{\tan} : [-0.1,0.1]^{n-1} \times [0.9,1.1] \times [-0.1,0.1]^{n-1} \to \R^{n-1}, \\
        &X_{\tan}(w,t;y) = (w'-t^2 y', \tan^{-1}(w_{n-1}/t)-ty_{n-1}), \label{eq: tan example def}
    \end{align}
    where $w=(w',w_{n-1})$ and $y=(y',y_{n-1})$.
    The map $X_{\tan}$ comes from a phase function $\phi_{\tan}$ satisfying Bourgain's condition, so $X_{\tan} \in \mathbf B$. We are able to show that $X^{(n)}_{\tan}$ lifts to a linear Kakeya set in $\R^{2n}$. 

    \begin{theorem}\label{thm: tan example main}
        Fix $n \geq 3$.
        If the linear Kakeya set conjecture holds in dimension $2n$, then $d_{\Kak}(X^{(n)}_{\tan}) = n$.
    \end{theorem}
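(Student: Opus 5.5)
We prove the lower bound $d_{\Kak}(X^{(n)}_{\tan}) \geq n$ by adapting the lifting mechanism of Subsection \ref{subsec: worked out example}; the upper bound $d_{\Kak} \leq n$ is trivial. The dimension count fixes the shape of the lift. We look for a submersion $\pi : \Omega \subset \R^{2n} \to \R^n$ with $n$-dimensional fibers, and for each curve $\gamma_{y,w}$ of $\mathcal C(X_{\tan})$ an $n$-parameter family of line segments $L_{y,w,r} \subset \pi^{-1}(\gamma_{y,w})$, where $r$ is a ruling parameter in an open subset of $\R^n$. We require that the slope map $(y,r) \mapsto \mathrm{dir}(L_{y,w,r})$ be a local diffeomorphism from $\R^{(n-1)+n}$ onto an open subset of the $(2n-1)$-dimensional sphere of directions, uniformly in $w$. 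Given this, for an $X_{\tan}$-Kakeya set $K$ we set $\widetilde K = \pi^{-1}(K)\cap \Omega_0$. It contains $\bigcup_{y,r} L_{y,w(y),r}$, so finitely many rotated copies form a linear Kakeya set in $\R^{2n}$. Since $\pi$ is a submersion with $n$-dimensional fibers, $\dim_\H \widetilde K \leq \dim_\H K + n$. The linear Kakeya conjecture in $\R^{2n}$ then gives $2n \leq \dim_\H K + n$, that is, $\dim_\H K \geq n$.

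We build the lift coordinatewise, exploiting the splitting of $X_{\tan}$ into $n-2$ quadratic coordinates and one angular coordinate. For each $j \leq n-2$ the constraint is $x_j + t^2 y_j = w_j$. As in the worked example, we introduce a new variable $s_j$ and the map $(u_j,s_j,t) \mapsto u_j + t s_j$. Then the line $s_j = r_j - t y_j$, $u_j = w_j - t r_j$ projects into the curve, with slope $(-r_j,-y_j)$ in the $(u_j,s_j)$ variables. This accounts for $n-2$ extra dimensions and $n-2$ ruling parameters. The remaining two extra dimensions and two ruling parameters must handle the last coordinate. There the curve is $\{(t,x_{n-1}) : t\sin(x_{n-1}+ty_{n-1}) - w_{n-1}\cos(x_{n-1}+ty_{n-1}) = 0\}$: the angle of the point $(t,w_{n-1})$ on a horizontal line, shifted linearly by $-ty_{n-1}$. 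We choose $\pi$ on the last block $(a,b,c,t) \in \R^4$ so that $\pi^{-1}$ of this planar curve is a $3$-dimensional variety ruled by a $2$-parameter family of lines. The natural attempt is to use polar-type coordinates, mapping $(a,b,c,t)$ to an angle of $(t,a)$ corrected by a term linear in $t$ built from $(b,c)$. The aim is that the linear relation ``$w_{n-1}$ is constant along the horizontal line, and the shift is linear in $t$'' becomes affine in $t$ upstairs. The fact that $\phi_{\tan}$ satisfies Bourgain's condition suggests that the associated curves are projectively linearizable, which is what makes such a ruling plausible.

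After finding the ruling, we verify three properties. First, $\pi$ is a submersion on a compact neighbourhood $\Omega_0$ of all lifted segments. Second, the lifted segments have $t$-length bounded below, so they are genuine unit-scale segments after rescaling. Third, and most important, the Jacobian of the slope map in $(y,r)$ is nonsingular. The quadratic blocks contribute a block-triangular, full-rank piece exactly as in Subsection \ref{subsec: worked out example}, so everything reduces to a $3\times 3$ determinant (in the variables $y_{n-1}, r_{n-1}, r_n$) for the angular block, which we compute and check is nonvanishing on the compact parameter domains, possibly after shrinking them and covering by finitely many pieces.

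The main obstacle is the explicit construction of $\pi$ and of the two-parameter ruling for the $\tan^{-1}$ coordinate. If a direct polar ansatz fails, we first try a projective change of variables in $(t,x_{n-1})$ that straightens the family $\{\tan^{-1}(w/t)\}$, for instance passing to $\tan(x_{n-1}+ty_{n-1})$ as a coordinate. We then apply the quadratic lifting trick to the resulting algebraic relation $t\,\tau = w_{n-1}$, where $\tau = \tan(x_{n-1}+ty_{n-1})$.
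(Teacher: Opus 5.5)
Your framework (an $(n,0)$-lift fed into Proposition~\ref{prop: lift consequences}) matches the paper's. Your treatment of the $n-2$ quadratic coordinates ($u_j = w_j - tr_j$, $s_j = r_j - ty_j$, projected by $u_j + ts_j$) is exactly the paper's. But there is a genuine gap. The lift of the angular coordinate $\tan^{-1}(w_{n-1}/t) - ty_{n-1}$ is the only nontrivial content of the theorem, and it is never produced. Neither of your suggested routes works as stated.

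Take your polar ansatz, e.g.\ $\pi(a,b,c,t) = (\tan^{-1}(a/t) + b + tc,\ t)$. The point $(t,a(t))$ has its first coordinate pinned to $t$, so a lifted line must satisfy $\tan^{-1}(a(t)/t) - \tan^{-1}(w_{n-1}/t) = -P(t)$ with $P$ a polynomial. For $w_{n-1}\neq 0$ this forces $P$ to be constant and $a\equiv w_{n-1}$. Hence every lifted direction has zero $a$-component, and the slope map on the angular block has rank at most $2$ instead of $3$. That is a direction loss $\nu\geq 1$, so at best $d_{\Kak}(X_{\tan})\geq n-1$.

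The fallback coordinate $\tau = \tan(x_{n-1}+ty_{n-1})$ also fails. It depends on the direction parameter $y_{n-1}$, so it is not a change of variables on $\R^n$; the submersion $\pi$ cannot depend on $(y,w)$. Moreover, $t\tau = w_{n-1}$ is not of the polynomial-in-$t$ form the quadratic trick handles. Nor is Bourgain's condition what makes a lift possible. The paper notes that $\mathcal C(X_{\tan})$ is not diffeomorphic to a family of lines, and its lift uses the specific rotational structure of $\tan^{-1}$.

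The missing idea is to take the angle of two \emph{fresh} coordinates that both move linearly along the line. Rotations and dilations of that plane then become the two ruling parameters. The paper uses $\pi(b,c,u,v,q,t) = (b+tc,\ \tan^{-1}(u/v)+q,\ t)$. On the angular block it takes the lines $v+iu = \lambda e^{i\alpha}(t+iw_{n-1})$, i.e.\ $u = \lambda(w_{n-1}\cos\alpha + t\sin\alpha)$ and $v = \lambda(-w_{n-1}\sin\alpha + t\cos\alpha)$, together with $q = -\alpha - ty_{n-1}$.

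The tangent addition formula gives $\tan^{-1}(u/v) = \tan^{-1}(w_{n-1}/t)+\alpha$, so these lines project onto $\gamma_{y,w}$. The ruling parameters are $(\alpha,\lambda)$ near $(0,1)$, and the angular slope is $(\lambda\sin\alpha,\lambda\cos\alpha,-y_{n-1})$. Its Jacobian in $(y_{n-1},\alpha,\lambda)$ has determinant of absolute value $\lambda\neq 0$. Combined with your quadratic blocks this is an $(n,0)$-lift (Proposition~\ref{prop: lifting examples}(iii)), and Proposition~\ref{prop: lift consequences} gives the theorem.

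One smaller point: requiring the slope map to be a local diffeomorphism ``uniformly in $w$'' is not the right condition. Since $w(y)$ is an arbitrary choice, a $w$-dependent slope could collapse the set of directions $\{\mathrm{dir}(L_{y,w(y),r})\}$. The lift needs the slope $\Theta(y,\rho)$ to be independent of $w$, as it is in the paper's construction.
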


    Note that one cannot directly apply linear Kakeya in $\R^n$, because $\mathcal C(X_{\tan})$ is not diffeomorphic to a family of lines \cite{Nadjimzadah2026Bourgain}. 
    Theorem \ref{thm: tan example main} also complements the main result of \cite{Nadjimzadah2026Bourgain}: when a $X_{\tan}$-Kakeya set is additionally \emph{sticky}, one can take advantage of a linear Kakeya estimate in dimension $n$---one does not need to reach for dimension $2n$. As we will see, the lift to dimension $2n$ takes advantage of the special structure of $X_{\tan}$, so it is not clear whether one should expect a similar theorem to hold for each $X \in \mathbf B$. 

    Since $X_{\tan} \in \mathbf B$, it is conjectured that the optimal $L^n \to L^n$ maximal function estimate holds. This raises an interesting question: can one use the linear Kakeya maximal estimate in higher dimensions to prove the $L^n \to L^n$ maximal function estimate for $X_{\tan}$? More ambitiously, could the restriction conjecture in higher dimensions imply the optimal $L^p$ bounds for the oscillatory integral operator with the phase function $\phi_{\tan}$? We believe this may be an interesting direction.

    \subsection{Outline}

    In Section \ref{sec: general lifting} we set up the lifting mechanism in general, state its dimension and maximal function consequences (Proposition \ref{prop: lift consequences}), and state the examples of lifts that lead to Theorems \ref{thm: density}, \ref{thm: quadratic}, and \ref{thm: tan example main} (Proposition \ref{prop: lifting examples}). Theorem \ref{thm: tan example main} is proved there too. 
    In Section \ref{sec: prop: lift consequences} we prove Proposition \ref{prop: lift consequences}. In Section \ref{sec: prop: lifting examples} we prove Proposition \ref{prop: lifting examples}. In Section \ref{sec: thm: density} we prove Theorem \ref{thm: density}. In Section \ref{sec: thm: quadratic} we prove Theorem \ref{thm: quadratic}. 

    \noindent \textbf{Acknowledgements.} 
    The author would like to thank Terence Tao, Ruixiang Zhang, and Shaoming Guo for their encouragement.
    The author was supported in part by NSF DMS-2347850.

    \noindent \textbf{AI use statement.}
    The central idea of lifting curved Kakeya sets to higher-dimensional linear Kakeya sets is due to the author. OpenAI's ChatGPT (GPT-5.6 Pro) was subsequently used to help work out various examples of the lifting construction, and it was used in proofreading and revision.
    The author takes full responsibility for the content and correctness of the paper.

    \section{The general lifting mechanism}\label{sec: general lifting}

    We now describe the lifting mechanism which generalizes the example in Section \ref{subsec: worked out example}, and leads to Theorems \ref{thm: density}, \ref{thm: quadratic}, and \ref{thm: tan example main}.

    \begin{definition}[$(s,\nu)$-lift of $X^{(n)}$]
        We say that $X^{(n)}$ has an $(s,\nu)$-lift if the following holds. Define $N = n+s$. There exist
        \begin{enumerate}[label=(\roman*)]
            \item a compact ball $R \subset \R^s$,
            \item an open set $\Omega \subset \R^{N-1} \times \R$ and a smooth submersion 
            \begin{align}
                \pi : \Omega \to \R^{n-1} \times \R, \quad \pi(z,t) = (\pi_t(z), t),
            \end{align}
            \item smooth maps
            \begin{align}
                a : Y \times W \times R \to \R^{N-1}, \quad \Theta : Y \times R \to \R^{N-1},
            \end{align}
            such that, for every $(y,w,\rho,t) \in Y \times W \times R \times I$,
            \begin{align}
                L_{y,w,\rho}(t):=(a(y,w,\rho) + t\Theta(y,\rho), t) \in \Omega
            \end{align}
            and 
            \begin{align}\label{eq: projection property}
                \pi(L_{y,w,\rho}(t)) = \gamma_{y,w}(t).
            \end{align}
            Define $L_{y,w,\rho} = \{L_{y,w,\rho}(t) : t \in I\}$.
            Additionally on $Y \times R$,  
            \begin{align}\label{eq: direction rank}
                \rank D\Theta(y,\rho)= N-1-\nu.
            \end{align}
        \end{enumerate}
    \end{definition}
    We call $s$ the codimension of the lift and $\nu \geq 0$ the direction loss. The example in Section \ref{subsec: worked out example} is a $(1,0)$-lift.

    \begin{proposition}[Lift consequences]\label{prop: lift consequences}
        Fix $n \geq 3$ and consider $X \in \mathbf H^{(n)}$ with an $(s,\nu)$-lift. Then 
        \begin{align}\label{eq: dimension consequence}
            d_{\Kak}(X^{(n)}) \geq d_{\lin}(n+s-\nu) - s.
        \end{align}
        In particular if the linear Kakeya set conjecture holds in dimension $n+s-\nu$, then $d_{\Kak}(X^{(n)}) \geq n-\nu$. If moreover $\nu = 0$, then $d_{\Kak}(X^{(n)}) = n$. 

        Suppose $\nu = 0$. There is a compact ball $\widetilde Y \subset \R^{N-1}$ such that for every $p \in [1,\infty)$,
        \begin{align}\label{eq: max fnc consequence}
            \|\mathcal K_{\delta}^{X^{(n)}}\|_{L^p(\R^n) \to L^p(Y)} \lesssim  \|\mathcal K_\delta^{\lin, n+s-\nu} \|_{L^p(\R^n) \to L^p(\widetilde Y)}.
        \end{align}
    \end{proposition}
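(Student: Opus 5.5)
The plan is to generalize the worked example of Section \ref{subsec: worked out example}. Given an $X$-Kakeya set $K \subset \R^n$ with selection $y \mapsto w(y)$, set $\widetilde K = \pi^{-1}(K) \cap \Omega_0$, where $\Omega_0 \subset \Omega$ is a compact set containing every segment $L_{y,w,\rho}$. Compactness of the parameter domains and continuity of $a,\Theta$ give such an $\Omega_0$. The sets $L_{y,w,\rho}$ are compact images of $Y\times W\times R\times I$, so after shrinking $\Omega$ slightly we may assume $\pi$ is a submersion on a neighborhood of $\Omega_0$. By \eqref{eq: projection property}, $\widetilde K \supset L_{y,w(y),\rho}$ for every $(y,\rho) \in Y \times R$.

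\emph{Dimension upper bound.} Since $\pi$ is a submersion with $s$-dimensional fibers, it is locally a coordinate projection $\R^{N} \to \R^n$ after a diffeomorphism. Locally, then, $\widetilde K$ is bi-Lipschitz to a subset of $K \times [0,1]^s$. Using $\dim_H(A \times [0,1]^s) = \dim_H A + s$ (the upper box dimension of the cube equals its Hausdorff dimension), together with a finite cover of $\Omega_0$, we get $\dim_H \widetilde K \le \dim_H K + s$.

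\emph{Dimension lower bound.} The slope map $\Theta: Y\times R \to \R^{N-1}$ has constant rank $N-1-\nu$. By the constant rank theorem, near some point its image is an embedded submanifold $M$ of dimension $m := N-1-\nu$. After a linear change of coordinates, $M$ is locally a graph over a ball $U\subset\R^m$, namely $\theta = (u, g(u))$. We then project $\R^{N}$ onto the $(m+1)$ coordinates $(u$-coordinates$, t)$ by a linear map $P$, the first $m$ coordinates together with $t$. The image $P(\widetilde K)$ contains a segment $\{(P'a + tu, t)\}$ for every $u\in U$, so it is a Kakeya set in $\R^{m+1} = \R^{n+s-\nu}$ for an open set of directions (after localization). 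Finitely many rotated copies of it, rescaled to the normalization of $X_{\lin}$, cover all of $Y$, so $\dim_H P(\widetilde K) \ge d_{\lin}(n+s-\nu)$. Since $P$ is Lipschitz,
\[
d_{\lin}(n+s-\nu) \le \dim_H \widetilde K \le \dim_H K + s .
\]
This gives \eqref{eq: dimension consequence}. The consequences then follow: the conjecture gives $d_{\Kak} \ge n-\nu$, and when $\nu=0$ the trivial bound $d_{\Kak}\le n$ yields equality.

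\emph{Maximal function.} Now take $\nu=0$. Given $f$ on $\R^n$, define $\tilde f = (f\circ\pi)\,\chi_{\Omega_0}$. By the coarea formula and the local product structure, $\|\tilde f\|_{L^p(\R^N)} \lesssim \|f\|_{L^p(\R^n)}$. Next, $\pi(N_\delta(L_{y,w,\rho})) \subset N_{C\delta}(\gamma_{y,w})$. The key comparison is the averaging bound
\[
\frac{1}{|N_{c\delta}(\gamma_{y,w})|}\int_{N_{c\delta}(\gamma_{y,w})}|f| \lesssim \int_R \frac{1}{|N_\delta(L_{y,w,\rho})|}\int_{N_\delta(L_{y,w,\rho})}|\tilde f| \, d\rho .
\]
To prove it, we use that the tubes $N_\delta(L_{y,w,\rho})$, $\rho\in R$, sweep out $\pi^{-1}(N_{c\delta}(\gamma_{y,w}))\cap\Omega_0$ with bounded multiplicity, since $\pi^{-1}(\gamma)$ is ruled by the $L_\rho$ (up to shrinking $R$ and working locally). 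Since a sup of an average is at most the average of sups, we obtain
\[
\mathcal K^X_{c\delta} f(y) \lesssim \int_R \mathcal K^{\lin}_\delta\tilde f(\Theta(y,\rho))\,d\rho
\]
(after a diffeomorphic reparametrization of lines by slope). Applying Hölder in $\rho$ and then changing variables $(y,\rho)\mapsto\Theta(y,\rho)$, which is a local diffeomorphism onto $\widetilde Y$ with bounded Jacobian because $\nu=0$, gives \eqref{eq: max fnc consequence}. The scales $c\delta$ and $\delta$ are comparable, so the discrepancy is harmless.

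The main obstacle is the averaging step: it needs the lifted tubes to cover the preimage of the curved tube uniformly. This requires the ruling map $(\rho,t)\mapsto L_{y,w,\rho}(t)$ to be nondegenerate onto the fiber directions, which I would try to extract from the submersion property combined with the rank condition $\rank D\Theta = N-1$, possibly after restricting $R$ to a smaller ball. A further subtlety is that $\Theta$ may not be globally injective. I would handle this with a finite covering of $Y\times R$ by charts on which $\Theta$ is injective.
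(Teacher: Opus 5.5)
Your argument for \eqref{eq: dimension consequence} is the paper's. You lift to $\widetilde K=\pi^{-1}(K)\cap\Omega_0$ and use the local product structure of the submersion to get $\dim_H\widetilde K\le\dim_H K+s$. You then project onto $n+s-\nu$ coordinates on which $\Theta$ has full rank; your constant-rank/graph step is equivalent to the paper's choice of a coordinate projection $P$ with $\rank D(P\circ\Theta)=r$. Finitely many rotated copies finish it. That part is fine.

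The gap is in the maximal function bound. Your key averaging inequality is never established, and the route you propose for it does not work as stated.

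First, ``bounded multiplicity'' points the wrong way. Write the right side as $\delta^{-(N-1)}\int|\tilde f|\,m$ with $m(z)=|\{\rho\in R: z\in N_\delta(L_{y,w,\rho})\}|$ and apply the coarea formula. What you need is the \emph{lower} bound $\int_R\mathcal H^s(\pi^{-1}(x)\cap N_\delta(L_{y,w,\rho}))\,d\rho\gtrsim\delta^s$ for every $x\in N_{c\delta}(\gamma_{y,w})$. An upper bound on multiplicity plays no role.

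Second, the ruling nondegeneracy you hope to extract is not part of the definition of a lift. The condition $\rank D\Theta=N-1$ only makes $\partial_\rho\Theta$ injective. Hence $\partial_\rho a+t\,\partial_\rho\Theta$ is injective only for all but finitely many $t$. Shrinking $R$ does not remove these exceptional times, and near them the segments $L_{y,w,\rho}$ may bunch together and fail to sweep out the fibers uniformly.

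The missing idea is that no sweeping is needed, because a single lifted tube already captures the curved tube. Since $\pi$ is locally a coordinate projection and $\pi(L_{y,w,\rho}(t))=\gamma_{y,w}(t)$, the following holds for \emph{each fixed} $\rho$ and each $x\in N_\delta(\gamma_{y,w})$. There is a point of $\pi^{-1}(x)$ within $O(\delta)$ of $L_{y,w,\rho}$. Hence $\mathcal H^s(\pi^{-1}(x)\cap N_{A\delta}(L_{y,w,\rho})\cap\Omega_0)\gtrsim\delta^s$, taking $\Omega_0$ to contain a fixed neighborhood of the segments.

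The coarea formula then gives
\[
\int_{N_{A\delta}(L_{y,w,\rho})}|\tilde f|\gtrsim\delta^s\int_{N_\delta(\gamma_{y,w})}|f| .
\]
Divide by $|N_{A\delta}(L_{y,w,\rho})|\sim\delta^{n-1+s}$ and cover the $A\delta$-tube by $O_A(1)$ parallel $\delta$-tubes. This yields $\mathcal K^X_\delta f(y)\lesssim\mathcal K^{\lin,N}_\delta\tilde f(\Theta(y,\rho))$ for \emph{every} $\rho\in R$, which is exactly the paper's route.

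Your averaged inequality is true, since it follows by integrating this per-$\rho$ bound. But the per-$\rho$ bound makes the averaging over $\rho$, the H\"older step, and the $c\delta$ versus $\delta$ bookkeeping unnecessary. After it, you can integrate over $Y\times R$ and change variables through $\Theta$ as you planned; your finite-chart remark handles the possible non-injectivity of $\Theta$.
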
 

    \begin{remark}
        We will see that there are many $X$ with $(s,0)$-lifts that fail Bourgain's condition, and hence fail the critical $L^n \to L^n$ maximal function estimate. The estimate \eqref{eq: max fnc consequence} is still consistent with the linear Kakeya maximal conjecture in $\R^{n+s}$ because the critical exponent there is $p = n+s$, not $p = n$. 
    \end{remark}

    \begin{remark}
        One can prove a version of \eqref{eq: max fnc consequence} with $\nu > 0$, but we leave those details to the interested reader. The estimate \eqref{eq: max fnc consequence} is not used in the main arguments---its purpose is to demonstrate the quantitative strength of our notion of a lift. 
    \end{remark}

    The following Proposition records the lifts of the families of curves that lead to Theorems \ref{thm: density}, \ref{thm: quadratic}, and \ref{thm: tan example main}.

    \begin{proposition}[Examples of lifts of families of curves] \label{prop: lifting examples}\hspace{0pt} 
        \begin{enumerate}[label=(\roman*)]
            \item (Quadratic) Fix $n \geq 3$. The defining function $X_B$ from \eqref{eq: quadratic def} has a $(\rank B, \rank B - \rank B^2)$-lift.
            \item (Polynomial) Fix $n\geq 3$, $D \geq 1$, compact balls $W,Y \subset \R^{n-1}$, and a compact interval $I \subset \R$. Consider the defining function $X_D : W \times I \times Y$ given by 
            \begin{align}
                X_D(w,t;y) := A_0(w,y) + tA_1(w,y) + \cdots + t^{D-1} A_{D-1}(w,y) + t^D B(y),
            \end{align}
            where the $A_i$ are smooth functions, $\det DB \neq 0$, and $X_D$ is H\"ormander-type. Then $X_D$ has a $((D-1)(n-1), 0)$-lift.
            \item ($\tan$-example) The defining function $X_{\tan}$ from \eqref{eq: tan example def} has an $(n,0)$-lift. 
        \end{enumerate}
    \end{proposition}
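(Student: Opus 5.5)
The plan is to handle the three parts separately, in each case by writing down $\pi$, $a$, and $\Theta$ explicitly. The common principle is the one from Section \ref{subsec: worked out example}. The curve $\gamma_{y,w}$ is a polynomial (or nearly polynomial) expression in $t$. We introduce auxiliary coordinates $s$ and a projection $\pi_t(x,s)$ that is affine in $(x,s)$, with coefficients polynomial in $t$, so that $\pi_t$ turns lines $t\mapsto a+t\Theta$ into the higher powers of $t$. Every such $\pi$ has $\partial_x\pi_t=\mathrm{Id}$, so it is automatically a submersion on any open $\Omega$. The projection property \eqref{eq: projection property} then reduces to matching coefficients of powers of $t$. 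The real content in each part is computing $\rank D\Theta$.

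\emph{(i) Quadratic.} Let $r=\rank B$ and factor $B=UV$ with $U$ of size $(n-1)\times r$ injective and $V$ of size $r\times(n-1)$ surjective. Set $s=r$ and $\pi_t(x,\sigma)=x+tU\sigma$ for $x\in\R^{n-1}$, $\sigma\in\R^r$. Take $\rho\in R\subset\R^r$ and the lines $x(t)=w+t(y-U\rho)$, $\sigma(t)=\rho+tVy$. Then
\begin{align*}
\pi_t(x(t),\sigma(t)) = w+ty+t^2UVy = X_B(w,t;y),
\end{align*}
so $a=(w,\rho)$ and $\Theta(y,\rho)=(y-U\rho,\,Vy)$. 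The matrix $D\Theta=\begin{pmatrix} I & -U\\ V & 0\end{pmatrix}$ has rank $(n-1)+\rank(VU)$ by the Schur complement. Since $U$ is injective and $V$ is surjective, $\rank(VU)=\rank(UVUV)=\rank B^2$. Hence $\nu=(n-1+r)-(n-1+\rank B^2)=\rank B-\rank B^2$, as claimed.

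\emph{(ii) Polynomial.} Use coordinates $(x,s_1,\dots,s_{D-1})\in\R^{D(n-1)}$ and
\begin{align*}
\pi_t=x+ts_1+\cdots+t^{D-1}s_{D-1}.
\end{align*}
Take $\rho=(\rho_1,\dots,\rho_{D-1})$ and lines
\begin{align*}
x=A_0(w,y)+t\rho_1,\qquad s_j=\bigl(A_j(w,y)-\rho_j\bigr)+t\rho_{j+1}\ \ (j<D-1),
\end{align*}
together with $s_{D-1}=\bigl(A_{D-1}(w,y)-\rho_{D-1}\bigr)+tB(y)$. Expanding, the coefficient of $t^j$ is $\rho_j+A_j-\rho_j=A_j$, and the top coefficient is $B(y)$. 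Then $\Theta=(\rho_1,\dots,\rho_{D-1},B(y))$, which has full rank $D(n-1)=N-1$ because $\det DB\neq0$. So $\nu=0$. The only remaining point is choosing $R$ and $\Omega$ large enough, which is routine.

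\emph{(iii) $\tan$-example.} This part I expect to be the main obstacle. The first $n-2$ coordinates $w'-t^2y'$ are a quadratic family with $B=-I$. As in (i), we use $n-2$ extra coordinates $\sigma'$, with $x'=w'-t\rho'$ and $\sigma'=\rho'-ty'$, giving $\Theta'=(-\rho',-y')$ of full rank. That leaves two extra coordinates $(q,p)$ for the last component $\arctan(w_{n-1}/t)-ty_{n-1}$, and the lifted direction there, a vector in $\R^3$, must have rank $3$ in $(y_{n-1},\rho_{n-1},\rho_n)$. My first attempt is
\begin{align*}
\pi_t=\arctan(x/t-q)+tp,
\end{align*}
with $x=w+t\rho_{n-1}$, $q=\rho_{n-1}$, $p=-y$. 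This gives only rank $2$, because $q$ does not move along the line. To recover the missing direction I would exploit the rotation structure $\arctan(w/t)-ty=\arg\bigl(e^{-ity}(t+iw)\bigr)$. Concretely, I would try a projection of the form $\arg\bigl((t+ix)(\cdots)\bigr)$ in which a second auxiliary coordinate varies linearly in $t$ and cancels in the argument, in the same way that $\rho$ cancels in (i). I would then check submersivity on a small $\Omega$ near the relevant compact parameter set, which is possible since $t\in[0.9,1.1]$.
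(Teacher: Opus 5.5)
Your parts (i) and (ii) are correct and are exactly the paper's constructions: the same $\pi$, $a$, $\Theta$. Your Schur-complement count $\rank D\Theta=(n-1)+\rank(VU)$ is equivalent to the paper's identification $\ker D\Theta\cong\ker(VU)$. In (iii), your treatment of the first $n-2$ coordinates also matches the paper.

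The gap is the last component of (iii), which is the only nontrivial part of that claim. You show that your first ansatz is rank deficient, but you never write down a working $\pi$, $a$, $\Theta$, so (iii) is not proved. Moreover, the suggested form $\arg\bigl((t+ix)(\cdots)\bigr)$ keeps $t$ as the fixed real part of $t+ix$. That complex number then moves along lines with slope $1+i\theta$, which is only a one-parameter family of slopes. This is exactly the source of the missing direction, and your sketch does not say how the extra factor would restore it.

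The missing idea is to make the real part a lifted coordinate as well. One then uses that the angle $\tan^{-1}(u/v)$ is invariant under dilations and shifts by $\alpha$ under rotation by $\alpha$. The paper uses coordinates $(u,v,q)$ for the last component, with $\pi$-component $\tan^{-1}(u/v)+q$ (no explicit $t$), ruling parameters $(\alpha,\lambda)$ near $(0,1)$, and the lines
\begin{align*}
u=\lambda(w_{n-1}\cos\alpha+t\sin\alpha),\quad v=\lambda(t\cos\alpha-w_{n-1}\sin\alpha),\quad q=-\alpha-ty_{n-1},
\end{align*}
that is, $v+iu=\lambda e^{i\alpha}(t+iw_{n-1})$.

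The tangent addition formula gives $\tan^{-1}(u/v)=\tan^{-1}(w_{n-1}/t)+\alpha$ on this parameter range; here $t\in[0.9,1.1]$ keeps $v>0$ and the angle inside $(-\pi/2,\pi/2)$. The $\alpha$ then cancels against the constant term of $q$. The directions $(\lambda\sin\alpha,\lambda\cos\alpha,-y_{n-1})$ have Jacobian determinant $\pm\lambda\neq 0$ in $(\alpha,\lambda,y_{n-1})$, because the slope $\lambda e^{i\alpha}$ is just polar coordinates. Submersivity comes from $\partial_q$, together with $\partial_b$ and $\partial_t$. Combined with your first block, this makes $D\Theta$ invertible on $\R^{2n-1}$, which gives the $(n,0)$-lift.
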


    We prove Proposition \ref{prop: lift consequences} in the next section. Theorem \ref{thm: tan example main} follows immediately from Proposition \ref{prop: lifting examples}(iii) and Proposition \ref{prop: lift consequences}. Theorems \ref{thm: density} and \ref{thm: quadratic} require extra work, and they occupy their own sections.

    \section{Proof of Proposition \ref{prop: lift consequences}} \label{sec: prop: lift consequences}

    We first prove the Hausdorff dimension statement \eqref{eq: dimension consequence}.

    \begin{proof}[Proof of \eqref{eq: dimension consequence}]

    Let $K$ be an $X$-Kakeya set. For each $y \in Y$, choose $w(y) \in W$ such that $\gamma_{y,w(y)} \subset  K$.
    Fix a compact set $\Omega_0 \subset \Omega$ containing every lifted segment $L_{y,w,\rho}$, and define 
    \begin{align}\label{eq: tilde K defn}
        \widetilde K = \pi^{-1}(K) \cap \Omega_0.
    \end{align}
    By \eqref{eq: projection property}, 
    \begin{align}
        L_{y,w(y), \rho} \subset \widetilde K, \quad y \in Y, \rho \in R.
    \end{align}
    Since $\pi : \Omega \to \R^n$ is a submersion with $s$-dimensional fibers, it is locally bi-Lipschitz equivalent to the coordinate projection $\R^n \times \R^s \to \R^n$. Using the standard identity $\dim_H(E \times [0,1]^s) = \dim_{H} E + s$ and \eqref{eq: tilde K defn}, we obtain 
    \begin{align}\label{eq: step 1 of dim statement}
        \dim_H \widetilde K \leq \dim_H K + s.
    \end{align}
    
    At this stage the set of directions in $\widetilde K$ has dimension $N-1-\nu$. We project $\widetilde K$ onto $N - \nu$ dimensions so that we get a full Kakeya set in $\R^{N - \nu}$. Note that this step is only necessary when $\nu \geq 1$. 
    Define $r = N-1 -\nu$.
    Choose an interior point $(y_0,\rho_0) \in Y \times R$. Since $\rank D\Theta(y_0,\rho_0) = r$, there is a coordinate projection $P : \R^{N-1} \to \R^r$ such that 
    \begin{align}
        \rank D(P \circ \Theta )(y_0,\rho_0) = r.
    \end{align}
    By the submersion theorem, the image of $P \circ \Theta : Y \times R \to \R^{r}$ contains a nonempty open set $V \subset \R^r$. Define the linear projection $\Pi : \R^{N-1} \times \R \to \R^r \times \R$, $\Pi(x,t) = (P(x),t)$. For every $(y,\rho)$ in a small neighborhood of $(y_0,\rho_0)$, 
    \begin{align}
        \Pi(L_{y,w(y),\rho}) = \{ (P(a(y,w(y),\rho)) + t P(\Theta(y,\rho)), t) : t \in I\} \subset \Pi(\widetilde K).
    \end{align}
    Thus $\Pi(\widetilde K)$ contains a line segment in an open set of directions on $S^r$, so up to taking finitely many rotated copies of $\Pi(\widetilde K)$, we obtain a Kakeya set in $\R^{r+1}$. Consequently
    \begin{align}
        d_{\lin}(r + 1) \leq \dim_H \Pi(\widetilde K) \leq \dim_H \widetilde K.
    \end{align}
    Using $r + 1 = n + s -\nu$ and \eqref{eq: step 1 of dim statement}, we obtain 
    \begin{align}
        d_{\lin}(n+s-\nu) \leq \dim_H K + s.
    \end{align}
    Taking an infimum over all $X$-Kakeya sets $K$ proves \eqref{eq: dimension consequence}.
  
    \end{proof}

    Now we prove the maximal function statement \eqref{eq: max fnc consequence}, which is the final part of Proposition \ref{prop: lift consequences} left to be shown.

    \begin{proof}[Proof of \eqref{eq: max fnc consequence}]
    Assume $\nu = 0$ and set $N = n+s$.
    Let $\chi$ be the indicator of $\Omega_0$, a compact set containing all the segments $L_{y,w,\rho}$ for $(y,w,\rho) \in Y \times W \times R$. For $f \in L^p(\R^n)$, define $\tilde f = (f \circ \pi) \chi \in L^p(\R^N)$. Let $J_\pi$ be the $n$-dimensional Jacobian of $\pi$. Since $\pi$ is a submersion, $J_\pi \sim 1$. The coarea formula then gives 
    \begin{align}\label{eq: lp comparison fnc}
        \|\tilde f\|_{L^p(\R^N)}^p &\lesssim \int_{\R^n} |f(x)|^p \mathcal H^s(\pi^{-1}(x) \cap \Omega_0) dx \\
        &\lesssim \|f\|^p_{L^p(\R^n)}.
    \end{align}
    Here $\mathcal H^s$ is the $s$-dimensional Hausdorff measure.
    Now we compare the curved tubes with the lifted linear tubes. By compactness and the local normal form for $\pi$ as a projection, there exists a constant $A \geq 1$ such that for sufficiently small $\delta$ we have 
    \begin{align}\label{eq: fiber estimate}
        \mathcal H^s(\pi^{-1}(x) \cap N_{A\delta}(L_{y,w,\rho}) \cap \Omega_0) \gtrsim \delta^s,
    \end{align}
    whenever $x \in N_\delta(\gamma_{y,w})$.
    Indeed in local coordinates where $\pi$ is a projection, the set on the left contains an $s$-dimensional ball of radius $\sim \delta$. By the coarea formula and \eqref{eq: fiber estimate}, 
    \begin{align}
        \int_{N_{A\delta}(L_{y,w,\rho})} |\tilde f| \gtrsim \delta^s \int_{N_\delta(\gamma_{y,w})} |f|.
    \end{align}
    We also have $|N_{\delta}(\gamma_{y,w})| \sim \delta^{n-1}$ and $|N_{A\delta}(L_{y,w,\rho})| \sim \delta^{N-1}$. Thus as $N-1 = n-1+s$, we obtain 
    \begin{align}
        \frac{1}{|N_\delta(\gamma_{y,w})|} \int_{N_\delta(\gamma_{y,w})} |f| \lesssim \frac{1}{|N_{A\delta}(L_{y,w,\rho})|} \int_{N_{A\delta}(L_{y,w,\rho})} |\tilde f|.
    \end{align}
    After covering the tube $N_{A\delta}(L_{y,w,\rho})$ with $O_A(1)$ parallel $\delta$-tubes and taking a supremum over $w \in W$, we obtain
    \begin{align}\label{eq: ptwise max fnc est}
        \mathcal K_\delta^X f(y) \lesssim \mathcal K_{\delta}^{\lin, N} \tilde f(\Theta(y,\rho)).
    \end{align}
    Let $\widetilde Y$ be a compact ball containing $\Theta(Y \times R)$.
    Raising both sides of \eqref{eq: ptwise max fnc est} to $p$, integrating over $Y \times R$, and using that $\Theta : Y \times R \to \R^{N-1}$ has full rank, we obtain 
    \begin{align}
        \int_Y |\mathcal K_\delta^X f(y)|^p dy &\lesssim \int_{Y \times R} |\mathcal K_\delta^{\lin, N} \tilde f(\Theta(y,\rho))|^p dy d\rho  \\
        &\lesssim \int_{\widetilde Y} |\mathcal K_{\delta}^{\lin, N} \tilde f(\theta)| d\theta.
    \end{align}
    Thus
    \begin{align}
        \| \mathcal K_\delta^{X} f\|_{L^p(Y)} \lesssim \|\mathcal K_{\delta}^{\lin, N} \tilde f \|_{L^p(\widetilde Y)} \leq \|\mathcal K_\delta^{\lin,N}\|_{L^p(\R^N)\to L^p(\widetilde Y)} \| \tilde f\|_{L^p(\R^N)}.
    \end{align}
    Together with \eqref{eq: lp comparison fnc}, this proves \eqref{eq: max fnc consequence}. 
    \end{proof}

    \section{Proof of Proposition \ref{prop: lifting examples}}\label{sec: prop: lifting examples}

    We begin with Proposition \ref{prop: lifting examples}(i), which is the most concrete. 
    
    \begin{proof}[Proof of Proposition \ref{prop: lifting examples}(i)]
        Let $B$ be an $(n-1)\times (n-1)$ matrix, let $(s,\nu) = (\rank B, \rank B - \rank B^2)$, and define $q = \rank B$. Consider the rank factorization 
        \begin{align}
            B = UV, \quad U : \R^q \to \R^{n-1} \text{ injective}, \quad V : \R^{n-1} \to \R^q\text{ surjective.}
         \end{align}
         Consider the coordinates $(x,z,t) \in \R^{n-1} \times \R^q \times \R = \R^{N}$ and define 
         \begin{align}
             \pi(x,z,t) = (x + tUz,t) \in \R^{n-1} \times \R.
         \end{align}
         This is a submersion because $D\pi$ has rank $n$. Let $R \subset \R^q$ be a compact ball. For $\rho \in \R$, set 
         \begin{align}
             a(y,w,\rho) = (w,\rho), \quad \Theta(y,\rho) = (y- U\rho,Vy).
         \end{align}
         We may compute 
         \begin{align}
             \pi((a,0) + t(\Theta,1)) &= (w + t(y-U\rho) + tU(\rho + tVy),t) \\
             &= (w + (tI +t^2B)y,t),
         \end{align}
         so \eqref{eq: projection property} holds.

         Now we show that $\rank D\Theta = N - 1 - \nu = (n-1) + \rank B^2$. The kernel of $D \Theta$ consists of pairs $(h,k)$ satisfying
         \begin{align}
            h - Uk = 0, \quad Vh=0,
         \end{align}
         so $\ker D\Theta$ is naturally identified with $\ker(VU)$. Thus $\dim\ker (VU) = \dim \ker(D\Theta)$.
         Since $U$ is injective and $V$ is surjective, 
         \begin{align}
             \rank(VU) = \rank(UVUV) = \rank(B^2).
         \end{align}
         Then by applying rank-nullity to $VU$ and to $D\Theta$, we obtain 
         \begin{align}
             \rank D\Theta &= (n-1) + \rank(B^2) \\
             &= N-1-\nu.
         \end{align}
    \end{proof}

    Now we move on to Proposition \ref{prop: lifting examples}(ii).
    \begin{proof}[Proof of Proposition \ref{prop: lifting examples}(ii)]
        Consider 
        \begin{align}
            X_D(w,t;y) = \sum_{j=0}^{D-1} t^j A_j(w,y) + t^D B(y), \quad \det DB(y) \neq 0.
        \end{align}
        Set $s = (D-1)(n-1)$, so $N-1=D(n-1)$. Use the coordinates $(z_0,\ldots, z_{D-1}) \in (\R^{n-1})^D$. Define the submersion 
        \begin{align}
            \pi(z_0,\ldots, z_{D-1},t) := (\sum_{j=0}^{D-1} t^j z_j, t).
        \end{align}
        Write $\rho = (\rho_1,\ldots, \rho_{D-1})$ and define 
        \begin{align}
            a(y,w,\rho) &:= (A_0(w,y), A_1 (w,y)-\rho_1, \ldots, A_{D-1}(w,y)-\rho_{D-1}) \\
            \Theta(y,\rho) &:= (\rho_1,\ldots, \rho_{D-1}, B(y)).
        \end{align}
        We have the telescoping sum 
        \begin{align}
            \sum_{j=0}^{D-1} &t^j (a_j + t\Theta_j) \\
            &= A_0 + \sum_{j=1}^{D-1} t^j (A_j - \rho_j) + \sum_{j=1}^{D-1} t^j \rho_j + t^D B(y) \\
            &= X_D(w,t;y).
        \end{align}
        Thus we obtain \eqref{eq: projection property}:
        \begin{align}
            \pi((a,0)+t(\Theta,1)) = \gamma_{y,w}(t).
        \end{align}
        We have that 
        \begin{align}
            D\Theta(y,\rho) = 
            \begin{pmatrix}
                I_{(D-1)(n-1)} & 0 \\
                0 & DB(y)
            \end{pmatrix}
        \end{align}
        is invertible because $DB(y)$ is invertible. Thus $X_D$ has a $((D-1)(n-1), 0)$-lift.
    \end{proof}

    Finally we handle Proposition \ref{prop: lifting examples}(iii). 
    \begin{proof}[Proof of Proposition \ref{prop: lifting examples}(iii)]
        Set $k = n-2$ and write $y = (y', y_{n-1}) \in \R^k \times \R$, $w = (w', w_{n-1})$. Recall that 
        \begin{align}
            X_{\tan}(w,t;y) = (w' - t^2y', \tan^{-1}(w_{n-1}/t)-ty_{n-1}).
        \end{align}
        Consider the coordinates $(b,c,u,v,q,t) \in \R^k \times \R^k \times \R^4 = \R^{2n}$,
        and define
        \begin{align}
            \pi(b, c,u,v,q,t) = (b+tc, \tan^{-1} (u/v) + q, t).
        \end{align}
        This is a submersion, as is immediate from the derivatives in the $b,q$ and $t$ variables.
        Let the ruling parameter be $(r,\alpha, \lambda) \in \R^k \times \R \times \R$ and choose the compact ball $R$ centered at $(0,0,1)$ sufficiently small; radius $1/100$ will do. Define
        \begin{align}
            a(y,w,r,\alpha,\lambda) &:= (w',r,\lambda w_{n-1} \cos\alpha, -\lambda w_{n-1} \sin \alpha, -\alpha), \\
            \Theta(y,r,\alpha,\lambda) &:= (-r,-y', \lambda \sin \alpha, \lambda \cos \alpha, - y_{n-1}).
        \end{align}
        Then
        \begin{align}
            &\pi(L_{y,w,r,\alpha,\lambda}(t)) \\&= \pi(w'-tr,r-ty',\lambda(w_{n-1} \cos\alpha+t\sin\alpha),\lambda(-w_{n-1}\sin \alpha + t\cos \alpha),-\alpha-ty_{n-1},t) \\
            &=(w'-t^2y',\tan^{-1}(\frac{\lambda(w_{n-1}\cos \alpha + t \sin \alpha)}{\lambda(- w_{n-1}\sin \alpha +t\cos \alpha )})-\alpha - ty_{n-1},t). \label{eq: tan computation}
        \end{align}
        Let us handle the $\tan^{-1}$-term. Set $\beta = \tan^{-1}(w_{n-1}/t)$, so $\tan \beta = w_{n-1}/t$. The tangent addition formula gives 
        \begin{align}
            \frac{\lambda(w_{n-1}\cos \alpha + t \sin \alpha)}{\lambda(- w_{n-1}\sin \alpha +t\cos \alpha )} = \frac{\tan \beta + \tan \alpha}{1 - \tan \beta \tan \alpha} 
            = \tan(\beta + \alpha),
        \end{align}
        and therefore 
        \begin{align}
            \tan^{-1}(\frac{\lambda(w_{n-1}\cos \alpha + t \sin \alpha)}{\lambda(- w_{n-1}\sin \alpha +t\cos \alpha )})
            = \tan^{-1}(w_{n-1}/t) + \alpha. \label{eq: tan computation 2}
        \end{align}
        We used that $\beta + \alpha \in (-\pi/2, \pi/2)$, which is guaranteed by our range of parameters. Plugging \eqref{eq: tan computation 2} back into \eqref{eq: tan computation} gives $\pi(L_{y,w,r,\alpha,\lambda}(t)) = \gamma_{y,w}(t)$. 

        It remains to check the direction rank. With the variables ordered as $(y',y_{n-1}, r,\alpha,\lambda)$, the $y', y_{n-1}$, and $r$ blocks of $D\Theta$ are identity matrices up to a sign, while the $(\alpha,\lambda)$ block is 
        \begin{align}
            \begin{pmatrix}
            \lambda \cos \alpha & \sin \alpha \\
            -\lambda \sin \alpha & \cos \alpha
            \end{pmatrix},
        \end{align}
        whose determinant is $\lambda \neq 0$. Thus $D\Theta$ is invertible, and $X_{\tan}$ has an $(n,0)$-lift.
    \end{proof}

     \section{Proof of Theorem \ref{thm: density}}\label{sec: thm: density}

    There are two main components. First we identify the dense set $\mathbf P \subset \mathbf H$ so that conditional on linear Kakeya in all dimensions, $d_{\Kak}(X^{(n)}) = n$ for each $X \in \mathbf P$. Second, we prove that $\mathbf B \subset \mathbf H$ is closed nowhere dense. These are Lemmas \ref{lem: thm: density part 1} and \ref{lem: thm: density part 2} respectively below.

    \subsection{The dense set $\mathbf P \subset \mathbf H$}

    \begin{lemma}\label{lem: thm: density part 1}
        Let $n \geq 3$. There is a dense set $\mathbf P^{(n)} \subset \mathbf H^{(n)}$ such that, conditional on the linear Kakeya set conjecture in all dimensions, $d_{\Kak}(X) = n$ for each $X \in \mathbf P^{(n)}$. 
    \end{lemma}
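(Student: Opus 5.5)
The plan is to take $\mathbf P^{(n)}$ to be the set of polynomial-in-$t$ defining functions from Proposition \ref{prop: lifting examples}(ii), namely
\begin{align}
    \mathbf P^{(n)} = \Big\{ X \in \mathbf H^{(n)} : X(w,t;y) = \sum_{j=0}^{D-1} t^j A_j(w,y) + t^D B(y) \text{ for some } D \geq 1,\ A_j \in C^\infty,\ \det DB \neq 0 \text{ on } Y \Big\}.
\end{align}
For each such $X$, Proposition \ref{prop: lifting examples}(ii) gives a $((D-1)(n-1),0)$-lift. Proposition \ref{prop: lift consequences}, applied with the linear Kakeya conjecture in dimension $n+(D-1)(n-1)$, then yields $d_{\Kak}(X) \geq n$. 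The reverse inequality is trivial, so $d_{\Kak}(X)=n$. Thus the whole content of the lemma reduces to showing that $\mathbf P$ is dense in $\mathbf H$ for the Fréchet $C^\infty(Q)$ topology.

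For density, fix $X \in \mathbf H$, a seminorm order $r$ and $\eps>0$; we need $X' \in \mathbf P$ with $\|X-X'\|_{C^r(Q)}<\eps$. Since $\mathbf H$ is open, it suffices to produce $X'$ of the required form that is close to $X$; membership in $\mathbf H$ then comes for free.

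\emph{Step 1: polynomial approximation in $t$.} By the Weierstrass theorem with derivatives, or by Taylor/Bernstein approximation in $t$ applied with smooth dependence on the parameters $(w,y)$, one finds smooth $A_j(w,y)$ and some degree $D-1$ such that $P(w,t;y)=\sum_{j<D} t^j A_j(w,y)$ is within $\eps/2$ of $X$ in $C^r(Q)$. Concretely, one can convolve $X$ in all variables with a mollifier after a smooth extension, and then use Bernstein polynomials in $t$ on $I$. These have coefficients depending smoothly on $(w,y)$, and they converge in $C^r$ uniformly in the parameters because $\partial^\alpha_{w,y}$ commutes with the Bernstein operator in $t$.

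\emph{Step 2: adding the leading term.} Set $X' = P + \eta\, t^{D} y$ with $\eta>0$ small, so that $B(y)=\eta y$ and $\det DB = \eta^{n-1}\neq 0$. Since $|t|$, $|y|$ and their derivatives are bounded on the compact set $Q$, we have $\|\eta t^D y\|_{C^r(Q)} \lesssim_{D,r} \eta$. Choosing $\eta$ small after $D$ is fixed gives $\|X-X'\|_{C^r}<\eps$, and hence $X' \in \mathbf H \cap \mathbf P$.

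The only real obstacle is Step 1: making the approximation simultaneously $C^r$ in all variables while remaining polynomial in $t$ with smooth coefficients. I expect the Bernstein operator (or Chebyshev truncation) in $t$ to handle this, since it is linear, commutes with $(w,y)$-derivatives, and approximates $t$-derivatives of a $C^{r+1}$ function uniformly. Note also that the order matters: $D$ must be fixed before $\eta$ is chosen, because the $C^r$ cost of the term $t^D y$ depends on $D$.
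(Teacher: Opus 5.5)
Your proposal is correct and follows the paper's proof. It uses the same set $\mathbf P=\bigcup_D \mathbf P_D$, the same lower bound via Proposition \ref{prop: lifting examples}(ii) and Proposition \ref{prop: lift consequences}, and the same two-step density argument of approximating and then adding $\eta t^D y$. The only difference is Step 1: the paper skips your Bernstein-operator construction by citing the standard density of polynomials in all variables $(w,t,y)$ in $C^\infty(Q)$, and any such polynomial is automatically of the form $\sum_{j<D} t^j A_j(w,y)$, so the step you flagged as the main obstacle is immediate.
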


    \begin{proof}

    For $D \geq 1$, let $\mathbf P_D^{(n)}$ consist of the H\"ormander-type maps 
    \begin{align}
                X_D(w,t;y) := A_0(w,y) + tA_1(w,y) + \cdots + t^{D-1} A_{D-1}(w,y) + t^D B(y),
    \end{align}
    where the $A_i$ are smooth functions and $\det DB(y) \neq 0$. Define 
    \begin{align}
        \mathbf P^{(n)} := \bigcup_{D \geq 1} \mathbf P^{(n)}_D
    \end{align}

    By Proposition \ref{prop: lifting examples}(ii) and Proposition \ref{prop: lift consequences},
    every $X \in \mathbf P_D^{(n)}$ satisfies 
    \begin{align}
        d_{\Kak}(X) \geq d_{\lin}(n + (D-1)(n-1)) - (D-1)(n-1)
    \end{align}
    Under the linear Kakeya set conjecture, the light hand side equals $n$, and hence $d_\Kak(X) = n$. 
    
    It remains to prove density. It suffices to show that for each $X \in \mathbf H$, integer $r \geq 1$, and $\eps > 0$, there is $P \in \mathbf P$ such that 
    $\|X-P\|_{C^r(Q)} \leq \eps$. 
    
    Since $\mathbf H \subset C^\infty(Q ; \R^{n-1})$ is open and polynomials are dense in $C^\infty(Q ; \R^{n-1})$, we can find a polynomial $S : \R^{2n-1} \to \R^{n-1}$ such that $S \in \mathbf H$ and 
    $\|X-S\|_{C^r(Q)} \leq \eps/2$. We can then write
    \begin{align}
        S(w,t;y) = \sum_{i=0}^{D-1} t^i A_i(w,y)
    \end{align}
    for some $D \geq 1$, where the $A_i$ are polynomials and in particular are smooth functions. Again using that $\mathbf H$ is open, there is $\delta >0$ and $s$ so that for any perturbation $R$ satisfying $\|R\|_{C^s(Q)} \leq \delta$, one has $S + R \in \mathbf H$. We choose $R(w,t;y) = \eta t^{D} y$ with $\eta > 0$ taken small enough that $\|R\|_{C^{\max(s,r)}(Q)} \leq \min(\delta, \eps/2)$. Set $P := S + R \in \mathbf H$. We therefore have $P \in \mathbf P$ since $P$ takes the form
    \begin{align}
        S(w,t;y) = \sum_{i=0}^{D-1} t^i A_i(w,y) + t^D \eta y,
    \end{align}
    the leading coefficient $B(y) = \eta y$ satisfies $\det DB = \eta^{n-1} \neq 0$. Also $\|X-P\|_{C^r(Q)} \leq \eps$, so we have shown that $\mathbf P \subset \mathbf H$ is dense. 

    \end{proof}
    
    \subsection{Bourgain's condition is closed nowhere dense}

    \begin{lemma}\label{lem: thm: density part 2}
        Let $n \geq 3$. The set $\mathbf B^{(n)}$ is closed nowhere dense in $\mathbf H^{(n)}$.
    \end{lemma}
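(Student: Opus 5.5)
We prove closedness and empty interior separately. Throughout we write $M(w,t;y) := (\nabla_w X)^{-1}\nabla_y X$, an $(n-1)\times(n-1)$ matrix-valued function on $Q$. For $X \in \mathbf H$ the matrix $\partial_t M$ is invertible on $Q$, so in particular $\partial_t M \neq 0$ everywhere.

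\emph{Closedness.} First we rephrase Bourgain's condition in a way that does not involve $\lambda$. Because $\partial_t M$ is invertible, a scalar $\lambda$ with $\partial_t^2 M = \lambda\,\partial_t M$ exists if and only if
\begin{align}
    \partial_t^2 M \wedge \partial_t M = 0,
\end{align}
where we regard both matrices as vectors in $\R^{(n-1)^2}$. Equivalently, all $2\times 2$ minors of the pair vanish. In that case $\lambda$ is forced to be
\begin{align}
    \lambda = \langle \partial_t^2 M, \partial_t M\rangle / |\partial_t M|^2,
\end{align}
which is automatically continuous. The map $X \mapsto \partial_t^2 M \wedge \partial_t M$ depends continuously on $X \in \mathbf H$ as a map into $C^0(Q)$, since it involves only three derivatives of $X$ and the inverse $(\nabla_w X)^{-1}$, which is continuous on $\mathbf H$. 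Thus $\mathbf B$ is the preimage of $\{0\}$ under a continuous map, and so it is closed in $\mathbf H$.

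\emph{Empty interior.} Fix $X \in \mathbf B$, $r \geq 1$ and $\eps > 0$. We need some $X' \in \mathbf H\setminus\mathbf B$ with $\|X-X'\|_{C^r(Q)}\leq\eps$. Consider the perturbation
\begin{align}
    X_\eta = X + \eta\,\nabla_w X \, E(t)\, y,
\end{align}
where $E$ is a fixed matrix polynomial in $t$, so that $\nabla_y$ of the correction is $\eta\,\nabla_w X\, E$. Since $\nabla_w$ also acts on the correction, this changes $\nabla_w X$ by $O(\eta)$. A cleaner first-order computation is
\begin{align}
    M_\eta = M + \eta E(t) + O(\eta^2),
\end{align}
and the correction term is smooth in $\eta$.

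Now fix a point $q_0 \in Q$ and let $P = \partial_t M(q_0)$, which is invertible. Choose $E(t) = \tfrac{t^3}{6}C$, where $C$ is a constant matrix not proportional to $P$. Such a $C$ exists because $(n-1)^2 \geq 4$ when $n \geq 3$. Consider the quantity
\begin{align}
    F(\eta) := \partial_t^2 M_\eta \wedge \partial_t M_\eta \,\big|_{q_0}.
\end{align}
We have $F(0) = 0$, and
\begin{align}
    F'(0) = t_0 C \wedge P + \tfrac{t_0^2}{2}\, C \wedge \partial_t^2 M(q_0).
\end{align}
Since $X \in \mathbf B$, $\partial_t^2 M(q_0) = \lambda_0 P$, so
\begin{align}
    F'(0) = \bigl(t_0 + \tfrac{t_0^2}{2}\lambda_0\bigr)\, C \wedge P.
\end{align}
Here $C \wedge P \neq 0$ by the choice of $C$. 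The scalar factor $t_0 + \tfrac{t_0^2}{2}\lambda_0$ may vanish at a particular point; if so, we replace $q_0$ by a nearby point with a different $t_0$. This is possible because that factor is nonzero for all but at most two values of $t_0$ once $w_0$ and $y_0$ are fixed, and $I$ is a nondegenerate interval. Alternatively, one can replace $E$ by a polynomial in $t - t_0$ of suitable degree to kill the ambiguity.

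With $F'(0) \neq 0$, we get $F(\eta) \neq 0$ for all small $\eta \neq 0$. Hence $X_\eta \notin \mathbf B$ for such $\eta$. On the other hand $X_\eta \to X$ in $C^r(Q)$ as $\eta \to 0$, and $\mathbf H$ is open, so $X_\eta \in \mathbf H$ for $\eta$ small. This gives the desired approximant.

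The main obstacle is making the first-order expansion of $M_\eta$ honest, because the perturbation also changes $\nabla_w X$. To avoid this, I would instead use a perturbation independent of $w$, namely $X_\eta = X + \eta\, G(t) y$ with $G(t) = \tfrac{t^3}{6}\nabla_w X(q_0) C$. Then $\nabla_w X$ is unchanged and, exactly,
\begin{align}
    M_\eta = M + \eta (\nabla_w X)^{-1} G(t).
\end{align}
The derivative of $F$ at $\eta = 0$ is computed at $q_0$, where $(\nabla_w X)^{-1}$ cancels against the factor $\nabla_w X(q_0)$ in $G$. The $t$-derivatives of $(\nabla_w X)^{-1}$ contribute only terms multiplying $G(t_0)$ and $G'(t_0)$. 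These can be made to vanish by centering $G$ at $t_0$, that is, by using $(t-t_0)^3$ in place of $t^3$. With this centering, $\partial_t M_\eta$ is unchanged at $q_0$ and $\partial_t^2 M_\eta(q_0)$ changes by $0$, while $\partial_t^3$ changes. So I would instead use $(t-t_0)^2/2$, which gives $\partial_t^2 M_\eta(q_0) = \partial_t^2 M(q_0) + \eta C$ and leaves $\partial_t M(q_0)$ unchanged. Then
\begin{align}
    F(\eta) = \eta\, C \wedge P \neq 0,
\end{align}
which settles the argument without any case analysis.
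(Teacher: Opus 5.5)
Your final argument is correct and is essentially the paper's. Closedness comes from exhibiting $\mathbf B$ as the zero set of a continuous map: you use $\partial_t^2 M\wedge\partial_t M$ and the paper uses the trace-free part of $(\partial_t S_X)^{-1}\partial_t^2 S_X$, which are equivalent because $\partial_t M$ is invertible. Empty interior comes from the $w$-independent perturbation $\tfrac{\eta}{2}(t-t_0)^2\nabla_w X(q_0)\,C\,y$, which is the paper's $\tfrac{\eps}{2}(t-t_0)^2A_0T_0M(y-y_0)$ with $C=T_0M$.

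In a write-up, discard the first attempt with $\eta\,\nabla_w X\,E(t)\,y$ entirely, for three reasons:
\begin{itemize}
\item The expansion $M_\eta=M+\eta E+O(\eta^2)$ is false in general, because the correction also changes $\nabla_w X$ and picks up $y$-derivatives of $\nabla_w X$.
\item The stated $F'(0)$ has a sign error, since $\partial_t^2M\wedge\partial_tE=-\tfrac{t_0^2}{2}\lambda_0\,C\wedge P$.
\item The ``at most two values of $t_0$'' claim ignores that $\lambda_0$ varies with $t_0$.
\end{itemize}
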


    \begin{proof}
        For $X \in \mathbf H$, define $S_X :=(\nabla_w X)^{-1} \nabla_y X$.
        Since $\partial_t S_X$ is invertible by H\"ormander's condition, Bourgain's condition is equivalent to 
        \begin{align} \label{eq: Bourgain equiv cond}
            (\partial_t S_X)^{-1} \partial_t^2 S_X = \lambda I_{n-1},
        \end{align}
        for a smooth scalar function $\lambda$.
        For an $m \times m$ matrix $A$, define the trace-free part by
        $\mathrm{tf}(A) :=A - \frac{\mathrm{tr}(A)}{m} I_m$. Then \eqref{eq: Bourgain equiv cond} is equivalent to $\mathrm{tf}((\partial_t S_X)^{-1} \partial_t^2 S_X)=0$. Define the operator
        \begin{align}
            \mathcal B : \mathbf H \to C^\infty(Q;\mathrm{Mat}_{n-1}(\R)), \quad \mathcal B(X):=\mathrm{tf}((\partial_t S_X)^{-1} \partial_t^2 S_X),
        \end{align}
        The map $\mathcal B$ is continuous on $\mathbf H$ because $A \mapsto \mathrm{tf}(A)$, differentiation, matrix inversion, and matrix multiplication are continuous in $C^\infty$. We have 
        \begin{align}
            \mathbf B = \{X \in \mathbf H : \mathcal B(X) = 0\},
        \end{align}
        so $\mathbf B$ is closed in $\mathbf H$.

        We will now show that $\mathbf B$ has empty interior, and hence is nowhere dense since $\mathbf B$ is also closed. To this end fix $X \in \mathbf B$ and $p_0 = (w_0,t_0;y_0)$ in the interior of $Q$. Define $A_0 = \nabla_w X(p_0), T_0=\partial_t S_X(p_0)$, and choose a non-zero trace-free $(n-1)\times (n-1)$ matrix $M$. For $\eps > 0$, define the perturbation 
        \begin{align}
            X_{\eps}(w,t;y) := X(w,t;y) + \frac{\eps}{2} (t-t_0)^2 A_0 T_0M (y-y_0).
        \end{align}
        For $\eps$ small enough, $X_\eps \in \mathbf H$. We may compute
        \begin{align}
            \partial_t S_{X_\eps}(p_0) = T_0, \quad \partial_t^2 S_{X_\eps}(p_0) = \partial_t^2 S_X(p_0) + \eps T_0 M.
        \end{align}
        Since $X \in \mathbf B$, $\partial_t^2 S_X(p_0) = \lambda_0 T_0$ for some scalar $\lambda_0$. Thus 
        \begin{align}
            \mathcal B(X_\eps)(p_0) &= \mathrm{tf}(\lambda_0 I_m + \eps M) \\
            &= \eps M \neq 0,
        \end{align}
        so $X_\eps \notin \mathbf B$.
        Since we can take $\eps > 0$ arbitrarily small, there is no open neighborhood of $X$ in $\mathbf H$ which is contained in $\mathbf B$. As $X \in \mathbf H$ was arbitrary, $\mathbf B$ has empty interior in $\mathbf H$, and is therefore nowhere dense.
    \end{proof}

    \section{Proof of Theorem \ref{thm: quadratic}}\label{sec: thm: quadratic}

    Fix an $(n-1) \times (n-1)$ matrix $B$. By Proposition \ref{prop: lifting examples}(i), $X_B^{(n)}$ has a $(\rank B, \rank B - \rank B^2)$-lift. By Proposition \ref{prop: lift consequences} and the assumed linear Kakeya set conjecture in dimension $n + \rank B^2$, we obtain 
    \begin{align}
        d_{\Kak}(X_B) \geq  n - \rank B + \rank B^2.
    \end{align}

    Now we prove the upper bound $d_{\Kak}(X_B) \leq n - \rank B + \rank B^2$ by constructing an $X_B$-Kakeya set contained in a smooth variety of dimension $(n-\rank B + \rank B^2)$. 
    Set
    \begin{align}
        d = \rank B - \rank B^2.
    \end{align}
    On a nilpotent Jordan block $J_\ell(0)$, 
    \begin{align}
        \rank J_\ell(0) - \rank J_\ell(0)^2 = \begin{cases}
            1, & \ell \geq 2, \\
            0, & \ell = 1,
        \end{cases}
    \end{align}
    while every block associated with a nonzero eigenvalue contributes zero. This $d$ is the number of nilpotent Jordan blocks of size at least two. 

    Choose $S \in \mathrm{GL}_{n-1}(\R)$ that puts $B$ into Jordan form, 
    \begin{align}
        J :=S^{-1}BS = ( \bigoplus_{j=1}^d J_{\ell_j}(0)) \oplus A, \quad \ell_j \geq 2,
    \end{align}
    where $A$ contains all remaining blocks. Let $E_{ij}^{(m)}$ be the $m \times m$ matrix with a $1$ in the $(i,j)$-entry and $0$ everywhere else. Let 
    \begin{align}
        C_J = ( \bigoplus_{j=1}^d E_{\ell_j, \ell_j-1}^{(\ell_j)}) \oplus 0, \quad C=SC_J S^{-1}.
    \end{align}
    Choose $W_B$ large enough that $C(Y_B) \subset W_B$, and set 
    \begin{align}
        K = \{((C+tI + t^2 B)y,t) : y \in Y_B, t \in I_B\}.
    \end{align}
    This is an $X_B$-Kakeya set.

    Consider the coordinates $x' = S^{-1}x$ and $y' = S^{-1}y$. Write $x_{k}'^{(j)}, y_{k}'^{(j)}$ for the coordinates of $x',y'$ corresponding to the $k$-th coordinate of the $j$-th block. Points of $K$ satisfy 
    \begin{align}
        x' = (C_J + tI + t^2 J)y'.
    \end{align}

    For the $j$-th selected Jordan block, 
    \begin{align}
            E_{\ell_j, \ell_{j}-1}^{(\ell_j)} + tI_{\ell_j} + t^2 J_{\ell_j}(0) &= 
            \begin{pmatrix}
                t & t^2 &        &        &        \\
                & t   & \ddots &        &        \\
                &     & \ddots & \ddots &        \\
                &     &        & t      & t^2    \\
                &     &        & 1      & t
            \end{pmatrix}.
        \end{align}
    Hence the final two coordinates in this block obey
    \begin{align}
            x_{\ell_j - 1}'^{(j)} &= ty_{\ell_{j}-1}'^{(j)} + t^2 y_{\ell_j}'^{(j)} \\
            x_{\ell_j}'^{(j)} &= y_{\ell_{j-1}}'^{(j)} + t y_{\ell_j}'^{(j)}.
        \end{align}
    Therefore 
    \begin{align}
        x_{\ell_j-1}'^{(j)} - t x_{\ell_j}'^{(j)} = 0, \quad 1 \leq j \leq d.
    \end{align}
    Define the polynomials 
    \begin{align}
        Q_j(x,t) = (S^{-1} x)_{\ell_{j-1}}^{(j)} - t(S^{-1}x)_{\ell_j}^{(j)}.
    \end{align}
    Then 
    \begin{align}
        K \subset V :=\bigcap_{j=1}^d \{Q_j = 0\}.
    \end{align}
    In the coordinates $x'$, $Q_j$ has derivative 1 with respect to $x_{\ell_j-1}'^{(j)}$, and every other $Q_i$ has derivative $0$ in that coordinate. Thus 
    $\nabla Q_1,\ldots, \nabla Q_d$ are linearly independent everywhere, so $V$ is a smooth variety of codimension $d$ and dimension 
    \begin{align}
        n-d = n - \rank B + \rank B^2.
    \end{align}
    Consequently,
    \begin{align}
        d_{\Kak}(X_B) \leq n - \rank B + \rank B^2.
    \end{align}
    Combining this with the lower bound proves the theorem.

\bibliographystyle{alpha}
\bibliography{reference}
	
\end{document}